\newtheorem{theorem}{Theorem}[section]
\newtheorem{lemma}[theorem]{Lemma}
\newtheorem{conjecture}[theorem]{Conjecture}
\theoremstyle{remark}
\newtheorem{claim}{Claim}[theorem]
\theoremstyle{definition}
\title{The spectral even cycle problem}
\date{\today}
\author{Sebastian Cioab\u{a}\thanks{Department of Mathematical Sciences, University of Delaware, \texttt{cioaba@udel.edu}. Research partially supported by National Science Foundation grant CIF-1815922.} \and Dheer Noal Desai\thanks{Department of Mathematical Sciences, University of Delaware, \texttt{dheernsd@udel.edu}} \and Michael Tait\thanks{Department of Mathematics \& Statistics, Villanova University, \texttt{michael.tait@villanova.edu}. Research partially supported by National Science Foundation grant DMS-2011553.}}
\begin{document}

\maketitle
\begin{abstract}
    In this paper, we study the maximum adjacency spectral radii of graphs of large order that do not contain an even cycle of given length. For $n>k$, let $S_{n,k}$ be the join of a clique on $k$ vertices with an independent set of $n-k$ vertices and denote by $S_{n,k}^+$ the graph obtained from $S_{n,k}$ by adding one edge. In 2010, Nikiforov conjectured that for $n$ large enough, the $C_{2k+2}$-free graph of maximum spectral radius is $S_{n,k}^+$ and that the $\{C_{2k+1},C_{2k+2}\}$-free graph of maximum spectral radius is $S_{n,k}$. We solve this two-part conjecture.
\end{abstract}
\section{Introduction}

The Tur\' an number of a graph $F$ is the maximum number of edges in a graph on $n$ vertices with no subgraph isomorphic to $F$. We use $\mathrm{ex}(n, F)$ to denote the Tur\' an number of $F$ and $\mathrm{EX}(n, F)$ for the set of $F$-free graphs on $n$ vertices with $\mathrm{ex}(n, F)$ many edges. In 1941, Tur\' an \cite{Turan41} determined $\mathrm{ex}(n,K_{r+1})$, where $K_{r+1}$ is the complete graph on $r+1$ vertices, showing that the unique extremal graph is the complete $r$-partite graph with part sizes as balanced as possible (called the {\em Tur\'an graph} and denoted by $T_r(n)$). 
Another celebrated theorem in extremal combinatorics is the Erd\H{o}s-Stone-Simonovits theorem \cite{ES66, ES46} which extends Tur\' an's theorem to all $r+1$ chromatic graphs, where $r \geq 2$. The statement is as follows,

\begin{equation} \label{eqESS}
  \mathrm{ex}(n,F) = \left( 1- \frac{1}{\chi (F) -1} + o(1) \right) 
  \frac{n^2}{2}, 
  \end{equation}
where $\chi(F)$ denotes the chromatic number of the forbidden graph $F$ and the term $o(1)$ goes to zero as $n$ goes to infinity. The Erd\H{o}s-Stone-Simonovits theorem gives the  exact asymptotics of the Tur\' an numbers for any forbidden graph $F$ with chromatic number $\chi(F) > 2$ and essentially says that the extremal graphs cannot do much better than the Tur\' an graphs. However, when the forbidden graphs are bipartite, that is $\chi(F) = 2$, we only get that $\mathrm{ex}(n,F) = o(n^2)$. Very little is known for even the simplest examples of bipartite graphs. Determining the asymptotics of $\mathrm{ex}(n,C_{2k})$ is one of the most famous Tur\' an-type open problem and is notoriously difficult. The order of magnitude is only known for $k\in \{2,3,5\}$ \cite{degeneratesurvey}, and determining it for other $k$ is called the even cycle problem.

In this paper, we study the spectral version of the Tur\' an problem for even cycles. Analogous to Tur\' an numbers, among all graphs on $n$ vertices, that do not contain $F$ as a subgraph, let $\mathrm{spex}(n, F)$ denote the maximum value of the spectral radius of their adjacency matrices. Also, let $\mathrm{SPEX}(n, F)$ denote the set of graphs with an adjacency matrix having spectral radius equal to $\mathrm{spex}(n, F)$. 
Nikiforov \cite{Nikiforovpaths} was the first to systematically investigate spectral Tur\' an-type problems, although several sporadic results appeared earlier. In particular, Nikiforov \cite{Nikiforov07} proved that $\mathrm{SPEX}(n, K_{r+1}) = \{T_r(n)\}$. Since the average degree of a graph lower bounds the spectral radius of its adjacency matrices, Nikiforov's spectral result strengthens Tur\' an's theorem and implies that $\mathrm{EX}(n,K_{r+1}) = \{T_r(n)\}$. Further, Nikiforov \cite{NZ}, Babai and Guiduli \cite{BG}, independently obtained spectral analogues of the K\H{o}vari-S\' os-Tur\' an theorem \cite{KST54}, when forbidding a complete bipartite graph $K_{s,t}$.  Moreover, using the average degree bound for the spectral radius gives bounds that match the best improvements to the K\H{o}vari-S\' os-Tur\' an theorem, obtained by F\"uredi \cite{F2}.      

Recently, determining $\mathrm{spex}(n, F)$ for various graphs $F$ has become very popular (see \cite{cioabua2022spectral, FG, SpectralIntersectingCliques, Yongtao21, LLT, NO, SSbook, Wilf86, YWZ, ZW, ZWF}). This fits into a broader framework of {\em Brualdi-Solheid problems} \cite{BS} which investigate the maximum spectral radius over all graphs belonging to a specified family of graphs. Numerous results are known in this area (see \cite{BZ, BLL, EZ, FN, nosal1970eigenvalues, S, SAH}). In \cite{Nikiforovpaths}, Nikiforov conjectured the solution to the spectral Tur\' an problem for even cycles.
Let $S_{n,k} := K_k \vee \overline{K}_{n-k}$ and $S_{n,k}^+ := K_k \vee (\overline{K}_{n-k-2} \cup K_2)$. 
The graph $S_{n,k}^+$ has $n$ vertices and does not contain any $C_{2k+2}$, while, $S_{n,k}$ has $n$ vertices and contains neither any $C_{2k+1}$ nor $C_{2k+2}$.
Nikiforov \cite[Conjecture 15]{Nikiforovpaths} made the following two-part conjecture. 
\begin{conjecture}
\label{conjecture Nikiforov cycles}
Let $k \geq 2$ and $G$ be a graph of sufficiently large $n$.  \\
(a) if $\lambda(G) \geq \lambda(S_{n,k})$ then $G$ contains $C_{2k+1}$ or $C_{2k+2}$ unless $G = S_{n,k}$; \\
(b) if $\lambda(G) \geq \lambda(S_{n,k}^+)$ then $G$ contains $C_{2k+2}$ unless $G = S_{n,k}^+$.
\end{conjecture}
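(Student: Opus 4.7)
Both parts (a) and (b) follow a common framework, differing only in the final combinatorial step. Let $G$ be an $n$-vertex graph satisfying the hypothesis of (a) or (b), set $\lambda = \lambda(G)$, and let $\mathbf{x}$ denote the Perron eigenvector of $G$, normalized so that $\max_v x_v = 1$ (achieved at some $u^*$). Since $\lambda(S_{n,k}), \lambda(S_{n,k}^+) = \sqrt{k(n-k)} + O(1)$, we have $\lambda = (1+o(1))\sqrt{kn}$ in either case, and in particular $G$ contains many edges and high-degree vertices.

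The first step is to establish a two-scale profile of $\mathbf{x}$, identifying a set $L$ of \emph{heavy} vertices with $x_v = 1 - o(1)$ and a set of \emph{light} vertices with $x_v = o(1)$. The eigenvalue equation $\lambda x_v = \sum_{u \sim v} x_u$, together with the Bondy--Simonovits bound $\mathrm{ex}(n, C_{2k+2}) = O(n^{1+1/(k+1)})$ --- which applies in both cases since $G$ is $C_{2k+2}$-free --- lets us control the distribution of eigenvector mass: the quadratic identity $\lambda \|\mathbf{x}\|^2 = 2\sum_{uv \in E(G)} x_u x_v$ combined with the edge-count bound forces the eigenvector weight to concentrate on a bounded number of vertices.

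Next I would pin down $|L| = k$ precisely. The lower bound $|L| \geq k$ follows from the inequality $\lambda \geq \sqrt{k(n-k)}$ applied to a Rayleigh quotient over the candidate heavy set. The upper bound $|L| \leq k$ is more delicate: $k{+}1$ heavy vertices each have $\lambda x_v \geq (1-o(1))\sqrt{kn}$, so by the eigenvalue equation they share common light neighbors of positive density, and one threads a $C_{2k+2}$ through these common neighborhoods, contradicting $C_{2k+2}$-freeness. With $|L| = k$ established, an edge-exchange argument on the Perron weight (if $w \in V \setminus L$ fails to be adjacent to some $v \in L$, move an edge from $w$ to $v$, strictly increasing $\mathbf{x}^T A \mathbf{x}$ while preserving the cycle-free property, contradicting the extremality of $G$) forces every light vertex to be joined to every heavy vertex, and an analogous argument forces $L$ to induce $K_k$. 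At this point $G$ contains $S_{n,k}$ as a spanning subgraph.

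Finally, consider the edges of $G$ inside $V \setminus L$. For part (b), two such edges --- whether disjoint or sharing a vertex --- combine with $G[L] = K_k$ to build a $C_{2k+2}$ via distinct explicit cycle constructions alternating between $L$ and the light side; thus $G$ has at most one edge inside $V \setminus L$, and the strict inequality $\lambda(S_{n,k}^+) > \lambda(S_{n,k})$ (Perron--Frobenius) then yields $G = S_{n,k}^+$. For part (a), even a single extra edge $\{u,v\} \subseteq V \setminus L$ already yields a $C_{2k+1}$ of the form $u - v - c_1 - w_1 - c_2 - w_2 - \cdots - c_{k-1} - w_{k-1} - c_k - u$, contradicting the hypothesis, so $V \setminus L$ is independent and $G = S_{n,k}$.

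The main obstacle will be the upper bound $|L| \leq k$. In the extremal graphs the heavy vertices have degree $n{-}1$, but the eigenvalue equation only forces them to have degree at least $(1-o(1))\sqrt{kn}$; constructing the forbidden $C_{2k+2}$ from $k{+}1$ heavy vertices therefore requires a delicate common-neighborhood argument rather than a crude union bound. The difficulty is compounded in part (a), since $\{C_{2k+1}, C_{2k+2}\}$-freeness does not bound the circumference of $G$, so Erd\H{o}s--Gallai type path bounds are unavailable and one must argue directly from the dense bipartite-like connection between $L$ and its complement. A secondary subtle point is verifying at each step of the edge-exchange argument in Step 3 that the local modification cannot accidentally create a forbidden cycle elsewhere in $G$.
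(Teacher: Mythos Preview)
Your overall architecture (locate a heavy set of size $k$, show it is complete to the rest, then count residual edges) matches the paper, but two of your key steps rest on mistaken premises that would block the argument as written.

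\textbf{Erd\H{o}s--Gallai is available, and essential.} You say path bounds are unavailable in part (a) because $\{C_{2k+1},C_{2k+2}\}$-freeness does not bound the circumference. But the paper never forbids long paths globally; it forbids them \emph{inside neighborhoods}. If $G$ is $C_{2k+2}$-free then $N_1(u)$ contains no $P_{2k+1}$, and the bipartite graph $G[N_1(u),N_2(u)]$ contains no $P_{2k+1}$ with both ends in $N_1(u)$. These local path bounds (via Erd\H{o}s--Gallai and a lemma of Nikiforov on paths with prescribed ends) give $e(N_1(u))\le kn$ and $e(N_1(u),N_2(u))\le (k+o(1))n$, and they hold verbatim in both parts (a) and (b) since only $C_{2k+2}$-freeness is used. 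Without this input your plan has no mechanism to upgrade the trivial degree bound $d(v)\ge \lambda x_v\approx\sqrt{kn}$ to a \emph{linear} degree bound, and that upgrade is exactly the obstacle you flag. The paper's resolution is to expand the second-order equation $\lambda^2 x_z=\sum_{u\sim z}\sum_{w\sim u}x_w$, bound the off-diagonal contribution by the neighborhood path counts above, and deduce $d(z)\ge (x_z-\epsilon)n$ for every $z$ in the heavy set $L'$; then $|L'|\le k$ follows simply because $k+1$ vertices of degree $(1-o(1))n$ would force a $K_{k+1,k+1}$.

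\textbf{The edge-exchange step is not cycle-safe as stated.} Your plan to force $K_{k,n-k}$ by moving a single edge $wz\to wv$ does not obviously preserve $C_{2k+2}$-freeness: once the heavy set $L'$ is in place, adding $wv$ with $v\in L'$ while $w$ still has two or more neighbors outside $L'$ can create a $C_{2k+2}$. The paper avoids this by (i) first replacing \emph{all} edges at any low-weight vertex by the full bundle to $L'$, which is safe because a fresh vertex joined only to $L'$ cannot lie on a new $C_{2k+2}$, and then (ii) showing the exceptional set $E=\{v\notin L': v\not\sim L'\}$ is empty by a spectral argument: each $u\in E$ has at most $k-1$ neighbors in $L'$ and at most one in $R$, so a constant fraction of its eigenweight lies inside $E$, forcing $\lambda(G[E])\ge c\lambda$, which contradicts $\lambda(G[E])\le\sqrt{2k|E|}$ since $|E|\le n/(8k^2)$.

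Your endgame (at most one edge inside $V\setminus L$ for (b), none for (a)) is correct and matches the paper.
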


In this paper, we fully resolve both parts of Conjecture \ref{conjecture Nikiforov cycles} in the affirmative.

\begin{theorem}
\label{thm spectral turan even cycles}
Let $k \geq 2$ and $n$ be sufficiently large, then $\mathrm{SPEX}(n, C_{2k+2}) = \{S_{n, k}^+\}$.
\end{theorem}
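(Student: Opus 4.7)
The plan is to analyze an extremal graph $G \in \mathrm{SPEX}(n, C_{2k+2})$ and show it must be isomorphic to $S_{n,k}^+$. Since $S_{n,k}^+$ is $C_{2k+2}$-free, the spectral radius $\lambda := \lambda(G)$ satisfies $\lambda \geq \lambda(S_{n,k}^+) = (1+o(1))\sqrt{kn}$, with the precise asymptotics obtained by a direct computation on the $2\times 2$ (or $3\times 3$) quotient matrix of $S_{n,k}^+$. Let $\mathbf{x}$ be the Perron eigenvector of $G$ normalized to $\|\mathbf{x}\|_\infty = 1$, attained at a vertex $z$. From the eigenvalue equations $\lambda x_z = \sum_{u \sim z} x_u$ and $\lambda^2 x_v = \sum_u (A^2)_{vu} x_u$ one extracts the initial crude information that $d(z) = (1-o(1))n$ and that $G$ has many vertices of degree $\Omega(n)$.

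The core of the argument is a stability step showing that $G$ has a set $L$ of exactly $k$ high-entry vertices which form a clique and dominate $V \setminus L$. I would define $L = \{v : x_v \geq 1 - \eta\}$ for a suitable small $\eta>0$. The upper bound $|L| \leq k$ is the more delicate direction: if $|L| \geq k+1$, then the $k+1$ apex vertices must share $\Omega(n)$ common neighbors, and a direct embedding argument, alternating between apex vertices and common neighbors and truncating to $2k+2$ vertices, produces a $C_{2k+2}$. The lower bound $|L| \geq k$ is forced by the spectral lower bound through a Rayleigh-quotient calculation, since a graph in which only $k-1$ vertices carry Perron mass close to $1$ would satisfy $\lambda \leq (1+o(1))\sqrt{(k-1)n}$, violating $\lambda \geq \sqrt{kn}$.

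With $|L| = k$ in hand, I would upgrade the structure. Since $G$ is extremal and WLOG connected, every non-edge $uv$ of $G$ must create $C_{2k+2}$ when added (otherwise $G+uv$ is a strictly larger $C_{2k+2}$-free graph in spectral radius). This saturation together with the domination structure forces $L$ to span a clique, and a local swap argument forces every $v \notin L$ to be adjacent to every $u \in L$: replacing the neighborhood of a $v \notin L$ that misses some $u \in L$ by $L$ itself increases $\lambda$ by Perron-vector comparison, since $x_u \approx 1$ dominates $x_w$ for $w \notin L$. Thus $G = K_k \vee H$ for some graph $H$ on $V \setminus L$.

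Finally, a short path-counting argument using that $L$ is a dominating $K_k$ rules out two independent edges and also a $P_3$ inside $H$: either configuration, combined with the remaining vertices of $V \setminus L$ and a suitable alternation through $L$, closes to a $C_{2k+2}$. Hence $H \in \{\overline{K}_{n-k}, K_2 \cup \overline{K}_{n-k-2}\}$, and since $\lambda(S_{n,k}^+) > \lambda(S_{n,k})$, the inequality $\lambda(G) \geq \lambda(S_{n,k}^+)$ excludes $H = \overline{K}_{n-k}$, giving $G = S_{n,k}^+$. The main obstacle will be the upper bound $|L| \leq k$: this is where the forbidden cycle must be exploited nontrivially, and one must embed $C_{2k+2}$ across a range of near-extremal configurations using both the apex structure and the common neighborhoods, while carrying enough eigenvector information to propagate the conclusion from ``heavy'' to ``uniformly large'' entries on $L$.
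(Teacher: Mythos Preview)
Your overall architecture matches the paper's, but there is a genuine gap in the step ``$|L| \geq k$''. You define $L = \{v : x_v \geq 1-\eta\}$ and claim that if $|L|\le k-1$ then a Rayleigh-quotient calculation gives $\lambda \le (1+o(1))\sqrt{(k-1)n}$. No such calculation exists: knowing that only $k-1$ vertices have entry close to $1$ says nothing about the contribution of the many vertices with moderate entries (say, entries of order a small constant), and those can easily push $\lambda$ up to $\sqrt{kn}$. The same issue infects your ``crude information'' step: from $\lambda^2 x_z = \sum_{u\sim z}\sum_{w\sim u} x_w$ together with the $C_{2k+2}$-free edge bounds you only get $kn \le 2k\,d(z) + O(n)$, which does not yield $d(z)=(1-o(1))n$; to squeeze out $d(z)\ge (1-\epsilon)n$ you must separate the walks that end in vertices of non-negligible eigenweight from those that do not.

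The paper repairs this by reversing the order of your bootstrap. It first defines a set $L'=\{v:x_v\ge \eta\}$ with a \emph{small} threshold $\eta$, shows (via the eigenvector equation plus Erd\H os--Gallai and Nikiforov's path lemma) that every $v\in L'$ has degree $\ge (x_v-\epsilon)n$, and that the number of edges between $N_1(x)\cap S$ and $L$ lies in $[(k-2\epsilon)n,(k+\epsilon)n]$. Only then does it deduce $|L'|=k$ (the upper bound from a $K_{k+1,k+1}$ embedding, the lower bound from the edge count just mentioned), and \emph{afterwards} upgrades all entries on $L'$ to $\ge 1-\tfrac{1}{16k^3}$. Your swap argument for clearing the exceptional set is in the right spirit, but it too only goes through once you know that vertices outside $L'$ have entries below the small threshold $\eta$ (not merely below $1-\eta$) and that exceptional vertices have at most one neighbour in $R$; otherwise $\sum_{w\sim v}x_w=\lambda x_v$ can be of order $\sqrt n$ and replacing $N(v)$ by $L$ decreases the Rayleigh quotient. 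In short, the missing idea is the two-threshold cascade: start with a low threshold to control degrees and cardinality, then bootstrap to high entries.
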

Theorem \ref{thm spectral turan even cycles} was proved to be true for $k=2$ by Zhai and Lin \cite{zhailinforbiddenhexagon}. We settle the remaining cases with $k>2$. We also prove the following theorem which resolves Part(a) of the conjecture.
\begin{theorem}
\label{thm spectral turan subsequent cycles}
Let $k \geq 2$ and $n$ be sufficiently large, then $\mathrm{SPEX}(n, \{C_{2k+1}, C_{2k+2}\}) = \{S_{n, k}\}$.
\end{theorem}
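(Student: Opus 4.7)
The plan is to bootstrap from Theorem \ref{thm spectral turan even cycles}. Let $G \in \mathrm{SPEX}(n,\{C_{2k+1},C_{2k+2}\})$. The longest cycle of $S_{n,k}$ has length $2k$, so $S_{n,k}$ is $\{C_{2k+1},C_{2k+2}\}$-free and hence $\lambda(G) \ge \lambda(S_{n,k})$. Also $G$ is $C_{2k+2}$-free, so Theorem \ref{thm spectral turan even cycles} gives $\lambda(G) \le \lambda(S_{n,k}^+)$, with equality only if $G = S_{n,k}^+$.

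First I observe that $S_{n,k}^+$ itself contains a $C_{2k+1}$: if $uv$ is the extra edge in the independent part, then for $n \ge 2k+1$ the cycle $u\, v\, a_1\, w_1\, a_2\, w_2\, \cdots\, a_{k-1}\, w_{k-1}\, a_k\, u$, alternating $k$ distinct vertices $a_i \in K_k$ with $k-1$ distinct vertices $w_i$ of the independent part (none equal to $u$ or $v$), has exactly $2k+1$ edges. Hence $G \neq S_{n,k}^+$, and in fact $\lambda(S_{n,k}) \le \lambda(G) < \lambda(S_{n,k}^+)$.

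Next I re-run the Perron-eigenvector and stability machinery already developed in the proof of Theorem \ref{thm spectral turan even cycles}. Its structural conclusions rely only on $C_{2k+2}$-freeness together with a spectral lower bound of the form $\lambda(G) \ge \lambda(S_{n,k})$, both of which are available here; they produce a partition $V(G) = A \sqcup B$ with $|A| = k$, $G[A] = K_k$, every vertex of $B$ adjacent to all of $A$, and $|E(G[B])|$ bounded by an absolute constant. Then the $C_{2k+1}$-freeness of $G$ eliminates every edge inside $B$: any such edge $xy$, combined with an alternating walk of length $2k-1$ from $y$ back to $x$ through $A$ and $B \setminus \{x,y\}$ (constructed exactly as in the first step), gives a $C_{2k+1}$. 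Hence $G[B]$ is empty and $G \subseteq S_{n,k}$; strict Perron monotonicity under edge addition on a connected graph, combined with $\lambda(G) \ge \lambda(S_{n,k})$, forces $G = S_{n,k}$.

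The main obstacle is verifying that the stability and structural lemmas used in the proof of Theorem \ref{thm spectral turan even cycles} still apply with the weaker lower bound $\lambda(G) \ge \lambda(S_{n,k})$ rather than $\lambda(G) \ge \lambda(S_{n,k}^+)$. Since $S_{n,k}^+$ and $S_{n,k}$ differ by a single edge on the same vertex set, $\lambda(S_{n,k}^+) - \lambda(S_{n,k}) = o(1)$ as $n \to \infty$, so any stability threshold that is bounded away from $0$ absorbs this gap; nonetheless, a careful re-inspection of each quantitative estimate in that proof is the technical heart of the argument.
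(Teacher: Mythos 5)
Your proposal is correct and follows essentially the paper's own route: the paper proves every structural lemma using only $C_{2k+2}$-freeness and the lower bound $\lambda \geq \lambda(S_{n,k}) \geq \sqrt{kn}$ (so they apply verbatim to the $\{C_{2k+1},C_{2k+2}\}$-extremal graph), yielding $K_{k,n-k} \subseteq G$, after which $C_{2k+1}$-freeness eliminates any edge inside $B$ exactly as you describe, and monotonicity of the spectral radius gives $G = S_{n,k}$. The ``technical heart'' you flag, and the detour through Theorem \ref{thm spectral turan even cycles} to exclude $S_{n,k}^+$, are both unnecessary, since the machinery is set up with the weaker lower bound from the start.
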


\section{Organization and Notation}

For some fixed $k\geq 2$, let $H_k \in \mathrm{SPEX}(n, C_{2k+2})$ be  a spectral extremal graph when forbidding the even cycle of size $2k+2$ and let $H_k'\in \mathrm{SPEX}(n, \{C_{2k+1}, C_{2k+2})$ be a spectral extremal graph when forbidding both $C_{2k+1}$ and $C_{2k+2}$. All of our arguments in Sections \ref{section even cycles structural lemmas} and \ref{section even cycle structural results for extremal graphs} apply with identical proofs to both $H_k$ and $H_k'$ as we will only be using the fact that the graph is extremal and $C_{2k+2}$-free, and so for brevity we will only state the results for $H_k$ until the proofs of Theorems \ref{thm spectral turan even cycles} and \ref{thm spectral turan subsequent cycles} (Section \ref{section proofs for even cycle chapter}).

For any vertex $u$ and non-negative integer $i$, let $N_i(u)$ denote the set of vertices at distance $i$ from $u$, with $d_i(u):=|N_i(u)|$. We use $d(u) = d_1(u)$ to denote the degree of $u$. For two disjoint subsets $X, Y\subset V(H_k)$, denote by $H_k[X,Y]$ the bipartite subgraph of $H_k$ with vertex set $X\cup Y$ that consists of all the edges with one endpoint in $X$ and the other endpoint in $Y$. Let $\mathcal{E}(X,Y)$ be the edge-set of $H_k[X,Y]$ and set $e(X,Y) := |\mathcal{E}(X,Y)|$. Denote by $\mathcal{E}(X)$ the set of edges with both endpoints in $X$ and set $e(X):=|\mathcal{E}(X)|$.

For a graph $G=(V,\mathcal{E})$, we denote by $A(G)$ its adjacency matrix and by $\lambda(G)$ the spectral radius of $A(G)$. Associated to the spectral radius of the adjacency matrix of any connected graph is a unique (up to scalar multiples) entrywise positive eigenvector, commonly referred to as the Perron vector of the matrix. Since adding an edge between two disconnected components of a graph does not create any cycles and increases the spectral radius, the graphs $H_k$ and $H_k'$ must be connected. 
Let $\mathbf{\mathrm{v}}$ be the Perron vector of the adjacency matrix of $H_k$ with maximum entry $\mathbf{\mathrm{v}}_x = 1$, where $\mathbf{\mathrm{v}}_u$ denotes the coordinate of $\mathbf{\mathrm{v}}$ with respect to some vertex $u$.

We will fix a small constant $\alpha$ below, and we define $L$ to be the following set of vertices of large weight, and denote by $S$ its complement:
\begin{equation*}
L := \{u \in V(H_k) |  \mathbf{\mathrm{v}}_u > \alpha\} \text{ and } S := V(H_k) \setminus L = \{u \in V(H_k) |  \mathbf{\mathrm{v}}_u \leq \alpha\}.
\end{equation*}
Additionally, we will also use the following set in the proof of Lemma \ref{degrees of L}. Let 
\begin{equation*}
M := \{u \in V(H_k) |  \mathbf{\mathrm{v}}_u \geq \alpha/3\}.    
\end{equation*}
Finally, we define a subset of $L$ called $L'$ by 
\begin{equation*}
L':= \{u| \mathbf{\mathrm{v}}_u \geq \eta\},
\end{equation*}
where $\eta > \alpha$ is a constant defined below. For a vertex $u$, denote
\begin{equation*}
L_i(u):= L \cap N_i(u), S_i(u):= S \cap N_i(u), \text{ and } M_i(u) := M \cap N_i(u).
\end{equation*}
If the vertex is unambiguous from context, we will use $L_i$, $S_i,$ and $M_i$ instead. 

With foresight, we choose $\eta$, $\epsilon$, and $\alpha$ to be any positive constants satisfying
\begin{equation}\label{eta choice}
    \eta < \left\{\frac{1}{k+1}, 1 - \frac{1}{16k^3}, \frac{1}{4} - \frac{1}{16k^2}\right\}
\end{equation}
\begin{equation}\label{epsilon choice}
   \epsilon <  \min \left\{ \frac{1}{16k^3}, \frac{\eta}{2}, \frac{\eta}{32k^3+2}\right\}
\end{equation}
\begin{equation}\label{alpha choice}
\alpha <  \frac{\epsilon^2}{10k}.
\end{equation}

We note that many of the above inequalities are redundant, but we leave them so that it is easier to see exactly what inequalities we are using throughout the proofs. In Sections \ref{section even cycles structural lemmas} and \ref{section even cycle structural results for extremal graphs}, we prove lemmas showing the structural properties of $H_k$ and $H_k'$. We reiterate that every lemma applies to both $H_k$ and $H_k'$ with identical proofs and so the proofs are only written for $H_k$. In Section \ref{section proofs for even cycle chapter}, we complete the proofs of Theorems \ref{thm spectral turan even cycles} and \ref{thm spectral turan subsequent cycles}.

\section{Lemmas from spectral and extremal graph theory}\label{section even cycles structural lemmas}

In this section, we record several lemmas that we will use. Some calculations may only apply for $n$ large enough without being explicitly stated. We start with a standard result from linear algebra which serves as a tool to bound the spectral radius of non-negative matrices.

\begin{lemma}
\label{lower bound for spectral radius of nonnegative matrices}

For a non-negative symmetric matrix $M$, a non-negative non-zero vector $\mathbf{\mathrm{y}}$ and a positive constant $c$, if $M \mathbf{\mathrm{y}} \geq c \mathbf{\mathrm{y}}$ entrywise, then $\lambda(M) \geq c$. 
\end{lemma}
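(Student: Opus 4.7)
The plan is to apply the Rayleigh quotient (variational) characterization of the largest eigenvalue of a symmetric matrix. Recall that for any real symmetric $M$ we have
\begin{equation*}
\lambda_{\max}(M) = \sup_{\mathrm{x}\neq 0} \frac{\mathrm{x}^T M \mathrm{x}}{\mathrm{x}^T \mathrm{x}},
\end{equation*}
and since $M$ is symmetric all its eigenvalues are real, so $\lambda(M) \geq \lambda_{\max}(M)$ by definition of the spectral radius. The whole lemma reduces to exhibiting one test vector whose Rayleigh quotient is at least $c$, and $\mathrm{y}$ itself is the obvious candidate.

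The first step is to take the inner product of the hypothesized entrywise inequality $M\mathrm{y} \geq c\,\mathrm{y}$ with $\mathrm{y}$ on the left. Because $\mathrm{y}$ is entrywise non-negative, multiplying the $i$-th coordinate inequality $(M\mathrm{y})_i \geq c\,\mathrm{y}_i$ by $\mathrm{y}_i \geq 0$ preserves the inequality, giving $\mathrm{y}_i (M\mathrm{y})_i \geq c\,\mathrm{y}_i^2$ for each $i$. Summing over $i$ yields
\begin{equation*}
\mathrm{y}^T M \mathrm{y} \;=\; \sum_i \mathrm{y}_i (M\mathrm{y})_i \;\geq\; c \sum_i \mathrm{y}_i^2 \;=\; c\, \mathrm{y}^T \mathrm{y}.
\end{equation*}

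The second step is to divide by $\mathrm{y}^T \mathrm{y}$, which is strictly positive because $\mathrm{y}$ is a non-negative \emph{non-zero} vector. This shows that $\mathrm{y}$ achieves Rayleigh quotient at least $c$, so $\lambda_{\max}(M) \geq c$, and hence $\lambda(M) \geq c$.

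There is essentially no obstacle here; the only mild subtlety is the need for $\mathrm{y} \neq 0$ to make division legitimate, which is given in the hypothesis. One could alternatively argue via the Perron eigenvector $\mathrm{x}$ of $M$, using $\lambda(M)\,\mathrm{x}^T \mathrm{y} = \mathrm{x}^T M \mathrm{y} \geq c\,\mathrm{x}^T \mathrm{y}$, but this route requires ensuring $\mathrm{x}^T \mathrm{y} > 0$ (which can fail in the reducible case), so the Rayleigh quotient approach above is cleaner and fully general.
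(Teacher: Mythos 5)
Your proof is correct and follows essentially the same route as the paper: pair the entrywise inequality with $\mathbf{\mathrm{y}}$ to get $\mathbf{\mathrm{y}}^T M \mathbf{\mathrm{y}} \geq c\, \mathbf{\mathrm{y}}^T \mathbf{\mathrm{y}}$ and conclude via the Rayleigh quotient. You are in fact slightly more careful than the paper, making explicit why the non-negativity of $\mathbf{\mathrm{y}}$ justifies summing the coordinate inequalities and why $\mathbf{\mathrm{y}}^T \mathbf{\mathrm{y}} > 0$.
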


\begin{proof}
Assume that $M \mathbf{\mathrm{y}} \geq c \mathbf{\mathrm{y}}$ entrywise, with the same assumptions for $M, \mathbf{\mathrm{y}}$ and $c$ as in the statement of the theorem. Then $\mathbf{\mathrm{y}}^T M \mathbf{\mathrm{y}} \geq \mathbf{\mathrm{y}}^T c \mathbf{\mathrm{y}}$ and $\lambda(M) \geq \dfrac{\mathbf{\mathrm{y}}^T M \mathbf{\mathrm{y}}}{\mathbf{\mathrm{y}}^T  \mathbf{\mathrm{y}}} \geq c$.
\end{proof}

\begin{lemma}[Erd\H{o}s-Gallai \cite{gallai1959maximal}]
\label{extremal path}
Any graph on $n$ vertices with no subgraph isomorphic to a path on $\ell$ vertices has at most $\dfrac{(\ell-2)n}{2}$ edges.
\end{lemma}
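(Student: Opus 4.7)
The plan is to induct on the number of vertices $n$. For the base case $n \leq \ell-1$, any graph on $n$ vertices has at most $\binom{n}{2} = \frac{n(n-1)}{2} \leq \frac{(\ell-2)n}{2}$ edges, which handles this range. For the inductive step, I fix a $P_\ell$-free graph $G$ on $n \geq \ell$ vertices and split into cases. If $G$ is disconnected, the bound follows by applying the induction hypothesis separately to each connected component and summing, so we may assume $G$ is connected.

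Next, if $G$ contains a vertex $v$ with $d(v) \leq \frac{\ell-2}{2}$, then $G - v$ is still $P_\ell$-free on $n-1$ vertices, and the induction hypothesis yields
\[
e(G) = e(G-v) + d(v) \leq \frac{(\ell-2)(n-1)}{2} + \frac{\ell-2}{2} = \frac{(\ell-2)n}{2},
\]
which is what we want.

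The only remaining case is that $G$ is connected with $\delta(G) > \frac{\ell-2}{2}$, and here I would derive a contradiction by exhibiting a copy of $P_\ell$. Start from a longest path $P = v_1 v_2 \cdots v_p$ in $G$; since $P$ is maximal, all neighbors of $v_1$ and of $v_p$ lie on $P$. Writing $A = \{i : v_1 v_{i+1} \in E(G)\}$ and $B = \{i : v_p v_i \in E(G)\}$, both sets are contained in $\{1, \ldots, p-1\}$, and their sizes sum to $d(v_1) + d(v_p) > \ell - 2$. If $p \leq \ell - 1$, then $|A| + |B| > p-1$ forces $A \cap B \neq \emptyset$, and any index $i$ in the intersection produces a cycle $v_1 v_{i+1} v_{i+2} \cdots v_p v_i v_{i-1} \cdots v_1$ of length $p$ through all the vertices of $P$. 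Because $G$ is connected and $n > p$, some vertex outside this cycle is adjacent to a cycle vertex, and splicing in that vertex produces a path on $p+1$ vertices, contradicting the maximality of $P$. Hence $p \geq \ell$, giving the required $P_\ell$.

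I expect the rotation/extension step in the last case to be the main obstacle, since it requires the careful bookkeeping on $A$ and $B$ and the cycle-to-path upgrade via connectedness. The other cases amount to clean inductive accounting, and the final bound $p \geq 2\delta(G)+1 \geq \ell$ drops out of the neighbor counts of the two endpoints of a longest path.
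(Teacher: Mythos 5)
Your proof is correct. Note that the paper does not prove this lemma at all --- it is the classical Erd\H{o}s--Gallai theorem, quoted with a citation to the original 1959 paper --- so there is no internal proof to compare against; what you have written is the standard textbook argument (induction on $n$, reduction to the connected minimum-degree case, then the longest-path rotation: the neighbor-index sets $A,B \subseteq \{1,\dots,p-1\}$ with $|A|+|B| > \ell-2$ forcing a cycle through all of $V(P)$, upgraded to a longer path via connectedness), and every step, including the base case $n \leq \ell-1$ and the deletion arithmetic $e(G) \leq \frac{(\ell-2)(n-1)}{2} + \frac{\ell-2}{2} = \frac{(\ell-2)n}{2}$, checks out. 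The only cosmetic remark is that the final aside ``$p \geq 2\delta(G)+1$'' is not what the argument literally establishes --- you derive $p \geq \ell$ by contradiction rather than through that inequality --- but this does not affect correctness.
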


We will be using Part B of the following lemma which appears in \cite[Lemma 1]{nikiforov2009degree}.
\begin{lemma}[Nikiforov \cite{nikiforov2009degree}]
\label{Many edges in a bipartition}
Suppose that $k \geq 1$ and let the vertices of a graph $G$ be partitioned into two sets $U$ and $W$.
\begin{enumerate}
    \item[(A)] If
    \begin{equation}
    \label{eqn Many edges in a bipartition A}
        2e(U) + e(U, W) > (2k-2)|U| + k|W|,
    \end{equation}
then there exists a path of order $2k$ or $2k+1$ with both ends in $U$.
\item[(B)] If
    \begin{equation}
    \label{eqn Many edges in a bipartition B}
        2e(U) + e(U, W) > (2k-1)|U| + k|W|,
    \end{equation}
then there exists a path of order $2k+1$ with both ends in $U$.
\end{enumerate}
\end{lemma}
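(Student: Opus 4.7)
The central identity $2e(U) + e(U,W) = \sum_{u \in U} d_G(u)$ shows that both (A) and (B) are really lower bounds on the degree sum of $U$. I will proceed by induction on $|V(G)| = |U| + |W|$; the base case is vacuous since the hypothesis cannot hold for $|V(G)|$ small.

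The inductive step prunes low-degree vertices. If some $w \in W$ satisfies $d_U(w) \le k-1$, I remove $w$: the left-hand side of the hypothesis drops by $d_U(w) < k$ while the right-hand side drops by exactly $k$ in both cases, so the strict inequality survives and induction on $G - w$ yields the path inside $G$. If some $u \in U$ satisfies $d_G(u) \le k-1$, I remove $u$: the left-hand side drops by $2d_U(u) + d_W(u) \le 2d_G(u) \le 2k-2$, which is at most the right-hand side drop of $2k-2$ in (A) and $2k-1$ in (B); induction again applies.

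Thus I may assume that every $u \in U$ has $d_G(u) \ge k$ and every $w \in W$ has $d_U(w) \ge k$. Combining the degree-sum hypothesis with $e(U,W) \ge k|W|$ yields
\[
2e(G) \ge 2e(U) + 2e(U,W) > \begin{cases} (2k-2)|U| + 2k|W| & \text{in (A),} \\ (2k-1)|U| + 2k|W| & \text{in (B),} \end{cases}
\]
so whenever $|W| > 0$ these give $e(G) > (k-1)(|U|+|W|)$ and $e(G) > \tfrac{2k-1}{2}(|U|+|W|)$ respectively, and Lemma \ref{extremal path} produces a $P_{2k}$ in case (A) and a $P_{2k+1}$ in case (B). When $|W| = 0$, Erd\H{o}s--Gallai applied directly to $G[U]$ gives a path of the desired order whose endpoints lie automatically in $U$.

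The main obstacle is forcing the endpoints of this long path into $U$ when $|W|>0$. I would handle this with a P\'osa-style rotation argument: at any endpoint $w \in W$ of a longest path, the $\ge k$ neighbors of $w$ in $U$ must all lie on the path (otherwise the path could be extended), which generates that many alternative endpoints via rotation, and the strict-inequality surplus in the hypothesis should ensure that some rotation moves an endpoint into $U$. The flexibility to accept a path of order $2k$ or $2k+1$ in part (A) makes this endpoint-relocation step substantially easier; for (B) the exact order $2k+1$ is required, and verifying that the rotations do not shorten the path below $2k+1$ while they shift the endpoints is where I expect most of the careful bookkeeping to lie.
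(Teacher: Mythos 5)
This lemma is quoted by the paper from Nikiforov \cite{nikiforov2009degree} without an internal proof, so there is nothing in the paper to compare against; judging your argument on its own merits, the reduction is sound but the proof is incomplete at exactly the step that carries all of the lemma's content. Your identity $2e(U)+e(U,W)=\sum_{u\in U}d_G(u)$, the pruning induction (deleting $w\in W$ with $d_U(w)\le k-1$ drops the left side by at most $k-1$ against $k$ on the right; deleting $u\in U$ with $d_G(u)\le k-1$ drops it by $2d_U(u)+d_W(u)\le 2k-2$ against $2k-2$, resp.\ $2k-1$), and the Erd\H{o}s--Gallai count producing a $P_{2k}$ resp.\ $P_{2k+1}$ somewhere in $G$ are all correct. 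But the lemma is precisely a statement about where the endpoints lie, and your final paragraph is an unverified hope, not an argument: once you pass to $2e(G)$ and invoke Lemma \ref{extremal path} you have discarded everything except the two min-degree facts, so there is no quantified ``strict-inequality surplus'' left to steer endpoints with.

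Moreover, the proposed rotation mechanism fails concretely. Rotations apply to a longest path and preserve its vertex set, hence its order $m$, which may far exceed $2k+1$; even if both endpoints were rotated into $U$, a subpath of order exactly $2k+1$ of such a $P_m$ need not have its ends in $U$, and the exact order is essential for the paper's application (a $P_{2k+1}$ with both ends in $N_1(u)$ closes a $C_{2k+2}$ through $u$). Worse, rotations need never reach $U$ at all: take $G=K_{a,a+1}$ with $U$ the side of size $a=3k$ and $W$ the other side, so that $e(U,W)=a(a+1)>(2k-1)a+k(a+1)$ and \eqref{eqn Many edges in a bipartition B} holds. Every longest path is a Hamiltonian path of odd order $2a+1$; since every path in a bipartite graph alternates sides, its two endpoints lie on the majority side $W$, and every rotation replaces a $W$-endpoint by the path-successor of a $U$-vertex, which is again in $W$. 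So no sequence of rotations ever produces a $U$-endpoint, while the lemma's conclusion holds only via a shorter, directly constructed path $u_0w_1u_1\cdots w_ku_k$ (which exists since the hypothesis forces $|U|\ge k+1$). A correct endgame therefore needs a genuinely different idea after the pruning step --- this is where the actual work in Nikiforov's proof lies --- and as written your proof has a real gap there.
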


Let $u$ be any vertex in $H_k$. Since our graph is $C_{2k+2}$-free, $N_1(u)$ may not contain a $P_{2k+1}$. Hence, by Lemma \ref{extremal path} we have 
\begin{equation}\label{N1 edge bound}
    e(N_1(u)) \leq \frac{2k-1}{2}d(u) < kn.
\end{equation}
Similarly the bipartite subgraph between $N_1(u)$ and $N_2(u)$ may not contain a $P_{2k+3}$, otherwise there is a $P_{2k+1}$ with both endpoints in $N_1(u)$ and hence a $C_{2k+2}$. Therefore, by Lemma \ref{extremal path} (forbidding $P_{2k+3}$ in the bipartite subgraph) and Lemma \ref{Many edges in a bipartition} (forbidding $P_{2k+1}$ with both endpoints in $N_1(u)$), we have 
\begin{equation}\label{N1 to N2 edge bound}
    e(N_1(u), N_2(u)) \leq \min\left\{ \frac{2k+1}{2}n, (2k-1)d(u) + k\left(n - d(u) - 1\right)\right\}.
\end{equation}
The spectral radius of $S_{n,k}$ gives a lower bound for $\lambda(H_k)$. We will modify an argument of Nikiforov (proof of \cite[Theorem 3]{Nikiforovpaths}) to obtain an upper bound for $\lambda(H_k)$.
\begin{lemma}\label{bounds on lambda}
$\sqrt{kn} \leq \frac{k-1 + \sqrt{(k-1)^2 + 4k(n-k)}}{2} \leq \lambda(H_k) \leq \sqrt{2k(n-1)}$.
\end{lemma}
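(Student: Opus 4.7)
The plan is to handle the three inequalities in turn, with the upper bound being the main work.

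For the middle equality, observe that $S_{n,k} = K_k \vee \overline{K}_{n-k}$ admits the equitable bipartition into the $K_k$-part and the independent-set part, so $\lambda(S_{n,k})$ equals the Perron root of the $2\times 2$ quotient matrix $\left(\begin{smallmatrix} k-1 & n-k \\ k & 0 \end{smallmatrix}\right)$. Solving the resulting quadratic $\lambda^2-(k-1)\lambda-k(n-k)=0$ yields exactly $\frac{k-1+\sqrt{(k-1)^2+4k(n-k)}}{2}$. To verify $\sqrt{kn}\le \lambda(S_{n,k})$, substitute $\lambda=\sqrt{kn}$ into this quadratic: the value is $k^2-(k-1)\sqrt{kn}$, which is nonpositive for $n$ large (e.g.\ $n\ge k^3/(k-1)^2$), and since the quadratic is increasing past its vertex, the positive root must exceed $\sqrt{kn}$. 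The inequality $\lambda(S_{n,k})\le\lambda(H_k)$ is then immediate from extremality of $H_k$: $S_{n,k}$ is $\{C_{2k+1},C_{2k+2}\}$-free, hence $C_{2k+2}$-free, so $\lambda(H_k)\ge\lambda(S_{n,k})$ by definition of $\mathrm{SPEX}(n,C_{2k+2})$. (The identical reasoning applies to $H_k'$.)

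The real content is the upper bound $\lambda(H_k)\le\sqrt{2k(n-1)}$, where I would follow a variant of Nikiforov's two-step walk argument. Let $x$ be the vertex with $\mathbf{\mathrm{v}}_x=1$; then, using $\mathbf{\mathrm{v}}_v\le 1$ for every $v$,
\[
\lambda^2 \;=\; \lambda^2\mathbf{\mathrm{v}}_x \;=\; \sum_{v} (A^2)_{xv}\,\mathbf{\mathrm{v}}_v \;\le\; \sum_{v}(A^2)_{xv} \;=\; d(x)+2e(N_1(x))+e(N_1(x),N_2(x)),
\]
where the last equality counts closed walks of length $2$ at $x$ plus walks leaving $N_1(x)$ into $N_1(x)$ or $N_2(x)$. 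Now apply Lemma \ref{Many edges in a bipartition}(B) with $U=N_1(x)$ and $W=N_2(x)$: a $P_{2k+1}$ with both endpoints in $N_1(x)$ would close through $x$ to form a $C_{2k+2}$ in $H_k$, which is forbidden; hence the hypothesis of (B) must fail, giving
\[
2e(N_1(x))+e(N_1(x),N_2(x))\;\le\;(2k-1)d(x)+k\,d_2(x).
\]
Combining the two displays and using $d(x)+d_2(x)\le n-1$,
\[
\lambda^2 \;\le\; 2k\,d(x)+k\,d_2(x) \;=\; k\,d(x)+k\bigl(d(x)+d_2(x)\bigr)\;\le\; k(n-1)+k(n-1)\;=\;2k(n-1).
\]

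The only subtle point is the application of Lemma \ref{Many edges in a bipartition}(B) to the induced partition $(N_1(x),N_2(x))$; the key observation there is that any $P_{2k+1}$ with both ends in $N_1(x)$ would extend via $x$ to a $C_{2k+2}$, so the bipartite inequality \eqref{eqn Many edges in a bipartition B} must be violated. Everything else is elementary quadratic estimation and an application of the extremal hypothesis.
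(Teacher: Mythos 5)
Your proof is correct and takes essentially the same approach as the paper: the upper bound is obtained by bounding $d(u)+2e(N_1(u))+e(N_1(u),N_2(u))$ via Lemma \ref{Many edges in a bipartition}(B) applied to $(N_1,N_2)$ (your Perron-entry step at the vertex $x$ with $\mathbf{\mathrm{v}}_x=1$ is just a direct proof of the max-row-sum bound on $A^2$ that the paper invokes), and the lower bound is the same extremality argument plus the quadratic computation of $\lambda(S_{n,k})$. The only differences are cosmetic: you spell out the quotient-matrix calculation and the requirement $n\geq k^3/(k-1)^2$ for $\sqrt{kn}\leq \lambda(S_{n,k})$, which the paper calls a ``straightforward calculation'' and subsumes under its blanket ``$n$ large enough'' convention.
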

\begin{proof}

Here the inner lower bound is precisely $\lambda(S_{n,k})$ and the first inequality on the left follows from a straightforward calculation. To prove the upper bound, let $u \in V(H_k)$. We use Lemma \ref{Many edges in a bipartition} over the graph $H_k[N_1(u) \cup N_2(u)]$ with $U = N_1(u)$ and $W = N_2(u)$. We know that $|N_1(u)| = d(u)$. Also, there cannot be any path on $2k+1$ vertices in $H_k[N_1(u) \cup N_2(u)]$ with both end points in $N_1(u)$. 
So \eqref{eqn Many edges in a bipartition B} implies that 
\begin{equation}
\label{eqn precursor to bounds on spectral radius using bipartition edges}
\begin{split}
2e(N_1(u)) + e(N_1(u), N_2(u)) &\leq (2k-1)d(u) + k d_2(u)\\ &\leq (2k-1)d(u) + k (n - d(u) - 1)\\ &= (k-1)d(u) + k (n - 1).    
\end{split}    
\end{equation}
The spectral radius of a non-negative matrix is at most the maximum of the row-sums of the matrix. Applying this result for $A^2(H_k)$ and its spectral radius $\lambda^2$ and using \eqref{eqn precursor to bounds on spectral radius using bipartition edges}, we obtain that
\begin{equation}
\begin{split}
\lambda^2 &\leq \max_{u \in V(H_k)} \big\{\sum_{w\in V(H_k)}\ A^2_{u,w} \big\} = \max_{u \in V(H_k)}\big\{\sum_{v \in N(u)}d(v)\big\}\\ &= \max_{u \in V(H_k)}\big\{d(u) + 2e(N_1(u)) + e(N_1(u), N_2(u))\}\\
&\leq kd(u) + k(n-1) \leq 2k(n-1).
\end{split}
\end{equation}
Thus, $\lambda \leq \sqrt{2k(n-1)}$.
\end{proof}

Next we determine an upper bound for the number of vertices in $L$. We use the same technique as in the proof of \cite[Lemma 8]{tait2017three}. For this work, we use the even-circuit theorem. Note that the best current bounds for $\mathrm{ex}(n, C_{2k})$ are given by He \cite{He} (see also Bukh and Jiang \cite{B}), but for our purposes, the dependence of the multiplicative constant on $k$ is not important. We use the following version because it makes the calculations slightly easier.

\begin{lemma}[Even Circuit Theorem \cite{Verstraeteevencircuit}]
\label{lemC2k}
For $k\geq 1$ and $n$ a natural number, 
\[\textup{ex}(n, C_{2k+2}) \leq 8kn^{(k+2)/(k+1)}. \]
\end{lemma}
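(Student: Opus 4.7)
I would prove this by contradiction: assume $G$ is a $C_{2k+2}$-free graph on $n$ vertices with $e(G) > 8kn^{(k+2)/(k+1)}$, and exhibit a $C_{2k+2}$ inside $G$.

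The first step is the standard minimum-degree reduction. By iteratively deleting any vertex of degree less than $e(G)/n$, one obtains a nonempty subgraph $H \subseteq G$ that is still $C_{2k+2}$-free and satisfies $\delta(H) > 8kn^{1/(k+1)}$. Fix any root $v \in V(H)$ and perform a breadth-first search, writing $L_i$ for the set of vertices at distance exactly $i$ from $v$ and $\ell_i = |L_i|$. The structural input from $C_{2k+2}$-freeness is that no two internally vertex-disjoint paths of length $k+1$ between the same pair of vertices can coexist in $H$, since concatenating them produces exactly a $C_{2k+2}$.

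The heart of the argument is a dichotomy carried out at each level $i \leq k$. Every vertex of $L_i$ has $\delta(H)$ neighbors in $H$, all lying in $L_{i-1} \cup L_i \cup L_{i+1}$. After accounting for the at most $\ell_{i-1} + \ell_i - 1$ endpoints available in the two lower levels, either (i) the bipartite graph between $L_i$ and $L_{i+1}$ enforces a level-growth estimate $\ell_{i+1} \geq (\delta(H)/c)\, \ell_i$ for some $c = c(k)$ -- in which case iterating yields $\ell_{k+1} \geq (\delta(H)/c)^{k+1} > n$, contradicting $|V(H)| \leq n$ -- or else (ii) many ``extra'' non-tree edges are forced at level $i$, and a pigeonhole on ancestor sequences in the BFS tree produces two length-$(k+1)$ paths from $v$ ending at a common vertex with disjoint interiors, producing the required $C_{2k+2}$.

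The hard part is executing option (ii) cleanly: one must extract two \emph{internally disjoint} length-$(k+1)$ paths from a raw edge count, which is the classical Bondy--Simonovits obstacle and requires careful bookkeeping of how often different ancestor sequences can collide. The constant $8k$ and the exponent $(k+2)/(k+1)$ in the stated bound are tuned precisely so the counting goes through with room to spare; I would not attempt to optimize them, since the subsequent arguments in the paper only use the order of magnitude.
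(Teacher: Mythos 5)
There is a genuine gap, and you have in fact named it yourself without closing it. First, note that the paper does not prove this lemma at all: it is imported verbatim from Verstraëte's even circuit theorem via citation, so the bar your sketch must clear is that of the cited literature (Verstraëte's proof, or the older Bondy--Simonovits argument it refines). Your step (i) is fine --- the minimum-degree reduction to $\delta(H) > e(G)/n > 8kn^{1/(k+1)}$ and the iterated level-growth contradiction are standard and the constants work with room to spare. The proposal fails at step (ii). A pigeonhole on ancestor sequences in the BFS tree does \emph{not} produce two length-$(k+1)$ paths from $v$ with disjoint interiors: two colliding BFS paths to a common vertex $w$ in level $k+1$ typically branch at some ancestor $u$ in level $j \geq 1$, and then all you obtain is a cycle of length $2(k+1-j) < 2k+2$. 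Since only the single cycle $C_{2k+2}$ is forbidden --- shorter cycles, even shorter \emph{even} cycles, are perfectly allowed in $H$ --- such a collision yields no contradiction whatsoever. Forcing the branch point to be $v$ itself is exactly what raw edge counting cannot do, and this is why the single-cycle exclusion problem is genuinely harder than excluding all cycles up to a given length.

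The missing idea is a dedicated structural lemma that converts density within or between BFS levels into a cycle of length \emph{exactly} $2k+2$. Bondy and Simonovits achieve this by analyzing the bipartite graph between consecutive levels and showing that a dense such graph contains cycles of all the relevant even lengths; Verstraëte's proof (the one actually cited) instead proves that a graph of average degree at least $4k$ contains paths of $k$ \emph{consecutive} lengths ending in a prescribed vertex set, and applies this inside the BFS layers to produce cycles of $k$ consecutive even lengths, one of which must be $2k+2$. Either route requires a careful argument of a page or more that your plan defers under the phrase ``the hard part is executing option (ii) cleanly.'' Acknowledging the Bondy--Simonovits obstacle is not the same as overcoming it: as written, your proposal is a correct outline of the easy half of the proof wrapped around an unproven core, and the unproven core is the theorem.
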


\begin{lemma}\label{very loose bound on size of L}
$|L| \leq \dfrac{16k^{1/2}n^{(k+3)/(2k+2)}}{\alpha}$ and $|M| \leq \dfrac{48k^{1/2}n^{(k+3)/(2k+2)}}{\alpha}$. 
\end{lemma}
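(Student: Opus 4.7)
The plan is to bound the degrees of vertices in $L$ (resp.\ $M$) from below using the Perron equation, then sum against the global Tur\'an-type edge bound from Lemma \ref{lemC2k}.

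First I would recall that $\mathbf{v}$ has maximum entry $\mathbf{v}_x = 1$, so for every vertex $u$ the eigenvalue equation $\lambda \mathbf{v}_u = \sum_{w \sim u} \mathbf{v}_w$ gives $\lambda \mathbf{v}_u \leq d(u)$. Combined with the lower bound $\lambda \geq \sqrt{kn}$ from Lemma \ref{bounds on lambda}, this yields $d(u) \geq \sqrt{kn}\,\mathbf{v}_u$ for every vertex $u$. In particular, each $u \in L$ satisfies $d(u) > \alpha\sqrt{kn}$, and each $u \in M$ satisfies $d(u) \geq (\alpha/3)\sqrt{kn}$.

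Next I would invoke that $H_k$ is $C_{2k+2}$-free, so Lemma \ref{lemC2k} implies $e(H_k) \leq 8k\, n^{(k+2)/(k+1)}$, and hence $\sum_{u\in V(H_k)} d(u) \leq 16k\, n^{(k+2)/(k+1)}$. Restricting the degree sum to $L$ and using the per-vertex lower bound gives
\[
\alpha\sqrt{kn}\,|L| \;<\; \sum_{u \in L} d(u) \;\leq\; 16k\, n^{(k+2)/(k+1)},
\]
which after dividing by $\alpha\sqrt{kn}$ and simplifying the exponent $\tfrac{k+2}{k+1}-\tfrac{1}{2} = \tfrac{k+3}{2k+2}$ yields $|L| \leq \dfrac{16\sqrt{k}\, n^{(k+3)/(2k+2)}}{\alpha}$. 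The bound for $|M|$ follows identically with $\alpha$ replaced by $\alpha/3$, which introduces an extra factor of $3$ and gives $|M| \leq \dfrac{48\sqrt{k}\, n^{(k+3)/(2k+2)}}{\alpha}$.

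There is really no obstacle here: the proof is a one-line degree counting argument once we have the Perron lower bound on $d(u)$ and the Even Circuit Theorem. The only thing to be mindful of is booking the exponent $(k+2)/(k+1) - 1/2$ correctly, and remembering that the Tur\'an edge bound is applied to the whole graph (not to a neighborhood), which is legitimate because $H_k$ itself is $C_{2k+2}$-free.
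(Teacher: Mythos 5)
Your proposal is correct and is essentially identical to the paper's own proof: both use the Perron equation with $\lambda \geq \sqrt{kn}$ and $\mathbf{\mathrm{v}}_w \leq 1$ to get $d(u) \geq \sqrt{kn}\,\mathbf{\mathrm{v}}_u$, then sum degrees over $L$ (resp.\ $M$) against the Even Circuit Theorem bound $2\,\mathrm{ex}(n, C_{2k+2}) \leq 16k\,n^{(k+2)/(k+1)}$ and simplify the exponent to $(k+3)/(2k+2)$. The bookkeeping, including the factor of $3$ for $M$, matches the paper exactly.
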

\begin{proof}
For any vertex $u \in V(H_k)$, we have the following equation relating the spectral radius and Perron vector entries,
\begin{equation}
\label{sum of Perron entries of neighbors}
\lambda \mathbf{\mathrm{v}}_u = \sum_{w \sim u}\mathbf{\mathrm{v}}_w.
\end{equation}
Because $\sqrt{kn}\leq \lambda$ and $v_w\leq 1$, we get that $$\sqrt{kn}\mathbf{\mathrm{v}}_u \leq \lambda \mathbf{\mathrm{v}}_u \leq d_u,$$ Summing up all these inequalities for $u\in L$, we obtain that
\[
\dfrac{|L|\sqrt{kn}\alpha}{2} \leq \frac{1}{2} \sum_{u\in L} \lambda \mathbf{\mathrm{v}}_u \leq \dfrac{1}{2}\sum_{u \in L} d_u \leq \dfrac{1}{2}\sum_{u \in V(H_k)}d_u \leq  \mathrm{ex}(n, C_{2k+2}) \leq 8kn^{(k+2)/(k+1)},
\]
which implies that
\begin{equation}
\label{bound on |L|}
    |L| \leq \dfrac{16k^{1/2}n^{(k+3)/(2k+2)}}{\alpha}. 
\end{equation}
The bound for $|M|$ is obtained similarly by replacing $\alpha$ by $\alpha/3$ everywhere above.
\end{proof}

We use the following result of Nikiforov \cite[Theorem 2]{nikiforov2009degree} to get a better upper bound for the size of $L$ in Lemma \ref{degrees of L}.   
\begin{lemma}[Nikiforov \cite{nikiforov2009degree}]\label{Degree powers in graphs with a forbidden even cycle}
Let $G$ be a graph with $n$ vertices and $m$ edges. If $G$ does not contain a $C_{2k+2}$ then
$$\sum_{u \in V(G)} d_G^2(u) \leq 2km + k(n-1)n.$$
\end{lemma}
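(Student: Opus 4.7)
The plan is to derive the bound by summing, over all vertices of $G$, the neighborhood inequality already recorded in the proof of Lemma \ref{bounds on lambda}. The entry point is the switch-of-summation identity
\begin{equation*}
\sum_{v \in V(G)} d(v)^2 \;=\; \sum_{u \in V(G)} \sum_{v \sim u} d(v),
\end{equation*}
which holds because each vertex $v$ is counted once for every neighbor $u$ it has, i.e.\ exactly $d(v)$ times.

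For a fixed $u \in V(G)$, I would decompose the inner sum according to where the second endpoint of an edge incident to $N_1(u)$ lives:
\begin{equation*}
\sum_{v \sim u} d(v) \;=\; d(u) + 2\,e(N_1(u)) + e(N_1(u), N_2(u)),
\end{equation*}
where the $d(u)$ term counts the edges $vu$ for $v \in N_1(u)$, the factor $2$ reflects that each edge inside $N_1(u)$ is incident to two neighbors of $u$, and each edge between $N_1(u)$ and $N_2(u)$ is counted exactly once. The $C_{2k+2}$-freeness of $G$ forbids any $P_{2k+1}$ in $G[N_1(u) \cup N_2(u)]$ with both endpoints in $N_1(u)$, since such a path together with $u$ as a common neighbor of its endpoints would close up to a $C_{2k+2}$. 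Applying Lemma \ref{Many edges in a bipartition}(B) with $U = N_1(u)$, $W = N_2(u)$ and using $d_2(u) \leq n - d(u) - 1$ yields, exactly as in \eqref{eqn precursor to bounds on spectral radius using bipartition edges},
\begin{equation*}
2\,e(N_1(u)) + e(N_1(u), N_2(u)) \;\leq\; (2k-1)d(u) + k\bigl(n - d(u) - 1\bigr) \;=\; (k-1)d(u) + k(n-1),
\end{equation*}
so $\sum_{v \sim u} d(v) \leq k\,d(u) + k(n-1)$.

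Summing this estimate over all $u \in V(G)$ and using $\sum_u d(u) = 2m$ gives
\begin{equation*}
\sum_{v \in V(G)} d(v)^2 \;\leq\; k\sum_{u \in V(G)} d(u) + k(n-1)n \;=\; 2km + k(n-1)n,
\end{equation*}
which is the desired inequality. No serious obstacle arises: the argument is a vertex-by-vertex reuse of the local bound already established in the proof of Lemma \ref{bounds on lambda}, and the only non-trivial ingredient is the Erd\H{o}s--Gallai-type estimate of Lemma \ref{Many edges in a bipartition}(B) applied around each vertex.
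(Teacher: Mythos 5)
Your proof is correct. Note that the paper does not prove this lemma at all---it imports it as Theorem 2 of Nikiforov \cite{nikiforov2009degree}---but your argument is exactly Nikiforov's original one: the identity $\sum_v d(v)^2 = \sum_u \sum_{v\sim u} d(v)$, the local decomposition $d(u) + 2e(N_1(u)) + e(N_1(u),N_2(u))$ bounded via Lemma \ref{Many edges in a bipartition}(B), and a sum over all vertices; indeed, the local estimate you use is precisely the one the paper itself derives in \eqref{eqn precursor to bounds on spectral radius using bipartition edges} inside the proof of Lemma \ref{bounds on lambda}, so your proof amounts to summing that displayed inequality over $u \in V(G)$ rather than taking its maximum.
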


Using Lemma \ref{bounds on lambda} we obtain the following lower bound for entries in the Perron vector of the extremal graphs by modifying a proof of Tait and Tobin (proof of \cite[Lemma 10]{tait2017three}).
\begin{lemma}
\label{minimum eigenvector entries}
For any vertex $u \in V(H_k)$, $\mathbf{\mathrm{v}}_u \geq \dfrac{1}{\lambda(H_k)} \geq \dfrac{1}{\sqrt{2k(n-1)}}$. 
\end{lemma}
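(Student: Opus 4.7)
The plan is to prove the inequality $\mathbf{\mathrm{v}}_u \geq 1/\lambda(H_k)$ by contradiction, via a pendant-attachment trick in the spirit of the proof of \cite[Lemma 10]{tait2017three}. The right-hand inequality $1/\lambda(H_k) \geq 1/\sqrt{2k(n-1)}$ is immediate from the upper bound $\lambda(H_k) \leq \sqrt{2k(n-1)}$ of Lemma \ref{bounds on lambda}, so all the work lies in the left-hand inequality.

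Suppose for contradiction that some $u \in V(H_k)$ satisfies $\mathbf{\mathrm{v}}_u < 1/\lambda$, writing $\lambda := \lambda(H_k)$. Since $\mathbf{\mathrm{v}}_x = 1 > 1/\lambda$, automatically $u \neq x$. I will form a new graph $H'$ on the vertex set $(V(H_k) \setminus \{u\}) \cup \{u'\}$ by deleting $u$ together with all its incident edges and attaching a new pendant vertex $u'$ whose unique neighbor is $x$. Then $|V(H')| = n$, and because $H_k - u$ is $C_{2k+2}$-free and a pendant vertex has degree one (hence cannot lie on any cycle), $H'$ is still $C_{2k+2}$-free; the same observation shows that $H'$ is also $C_{2k+1}$-free, which is what is needed when the argument is repeated for $H_k'$.

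Next, I will bound $\lambda(H')$ from below by the Rayleigh quotient of a calibrated test vector $\mathbf{\mathrm{y}}$ on $V(H')$: set $\mathbf{\mathrm{y}}_w = \mathbf{\mathrm{v}}_w$ for $w \in V(H_k) \setminus \{u\}$ and $\mathbf{\mathrm{y}}_{u'} = 1/\lambda$. Using the eigenequation $\sum_{w \sim u} \mathbf{\mathrm{v}}_w = \lambda \mathbf{\mathrm{v}}_u$ to account for the deleted edges, the computation should give
\begin{equation*}
\mathbf{\mathrm{y}}^{T} A(H') \mathbf{\mathrm{y}} = \lambda \|\mathbf{\mathrm{v}}\|^2 - 2\lambda \mathbf{\mathrm{v}}_u^2 + \tfrac{2}{\lambda}
\quad \text{and} \quad
\|\mathbf{\mathrm{y}}\|^2 = \|\mathbf{\mathrm{v}}\|^2 - \mathbf{\mathrm{v}}_u^2 + \tfrac{1}{\lambda^2}.
\end{equation*}
After cancellation, the inequality $\mathbf{\mathrm{y}}^{T} A(H') \mathbf{\mathrm{y}} > \lambda \|\mathbf{\mathrm{y}}\|^2$ collapses to $1/\lambda > \lambda \mathbf{\mathrm{v}}_u^2$, which is exactly the standing hypothesis $\mathbf{\mathrm{v}}_u < 1/\lambda$. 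So $\lambda(H') > \lambda(H_k)$, contradicting the extremality of $H_k$ among $n$-vertex $C_{2k+2}$-free graphs.

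There is essentially no combinatorial obstacle: adding a pendant cannot create any new cycle, so preservation of $C_{2k+2}$- (and $C_{2k+1}$-) freeness is automatic. The only genuine choice is the pendant weight, and the map $c \mapsto c(2 - \lambda c)$ that controls the net gain in the Rayleigh quotient is maximized at $c = 1/\lambda$ with value $1/\lambda$; this is precisely the sharpest choice that makes the inequality bite for every $u$ with $\mathbf{\mathrm{v}}_u < 1/\lambda$, and getting this calibration right is the only subtle point.
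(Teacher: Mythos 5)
Your proof is correct and is essentially the paper's own argument: the paper likewise derives a contradiction by deleting all edges at $u$ and attaching it as a pendant to the maximum-weight vertex $x$ (first noting via the eigenequation that $u \not\sim x$), then compares Rayleigh quotients, your only deviation being the fresh vertex $u'$ with calibrated weight $1/\lambda$ in place of reusing $\mathbf{\mathrm{v}}$ unchanged. Since keeping the weight $\mathbf{\mathrm{v}}_u$ also yields exactly the threshold $\mathbf{\mathrm{v}}_u < 1/\lambda$, your calibration is a cosmetic refinement rather than a different route.
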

\begin{proof}
Towards a contradiction, assume that there exists a vertex $u \in V(H_k)$, such that $\mathbf{\mathrm{v}}_u < \frac{1}{\lambda(H_k)}$. Then by \eqref{sum of Perron entries of neighbors}, $u$ cannot be adjacent to any vertex $x$ such that $\mathbf{\mathrm{v}}_x = 1$. Let $\hat{H_k}$ be the graph obtained by modifying $H_k$ by removing all the edges adjacent to $u$ and making $u$ adjacent to $x$. Then using the Rayleigh quotient, we have $\lambda(\hat{H_k}) > \lambda(H_k)$. Because adding a vertex of degree one to a graph cannot create a cycle, $\hat{H_k}$ does not contain any subgraph isomorphic to $C_{2k+2}$, contradicting that $H_k$ is extremal. 
\end{proof}

\section{Structural results for extremal graphs}
\label{section even cycle structural results for extremal graphs}

In this section, we will assume that $k\geq 2$ is fixed. 
We will be working with subgraphs in $H_k$ and due to lack of ambiguity we will drop $H_k$ from some notations now onward.
We will continue to use auxiliary constants $\alpha$, $\epsilon$, and $\eta$ and we will frequently assume that $n$ is larger than some constant depending only on $\alpha, k, \epsilon, \eta$. Every lemma in this section holds only for $n$ large enough.

\begin{lemma}
\label{degrees of L}

For any vertex $z \in L$, we have $d(z) \geq \frac{\alpha}{20k}n$. Also, $|L| \leq \frac{k+1}{(\alpha/20k)^2}$.

\end{lemma}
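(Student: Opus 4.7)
My plan is to prove the two conclusions in sequence: first the per-vertex degree lower bound (the substantive content), and then the bound on $|L|$ via Nikiforov's degree-power inequality.

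For the degree bound, the starting point is the Perron identity $\lambda \mathbf{\mathrm{v}}_z = \sum_{w \sim z} \mathbf{\mathrm{v}}_w$. Bounding $\mathbf{\mathrm{v}}_w \leq 1$ and using Lemma \ref{bounds on lambda} gives the preliminary estimate $d(z) \geq \lambda \mathbf{\mathrm{v}}_z > \alpha\sqrt{kn}$, which already exceeds $\alpha n/(20k)$ whenever $n \leq 400 k^3$. For the remaining (large $n$) regime, I iterate by writing $\lambda^2 \mathbf{\mathrm{v}}_z = \sum_u (A^2)_{zu}\mathbf{\mathrm{v}}_u$ and split the right-hand side according to whether the walk-endpoint $u$ lies in $L$ or in $S$. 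For $u \in L$, I use $\mathbf{\mathrm{v}}_u \leq 1$ together with $(A^2)_{zu} \leq d(z)$ and the loose bound $|L| \leq 16k^{1/2}n^{(k+3)/(2k+2)}/\alpha$ from Lemma \ref{very loose bound on size of L}. For $u \in S$, I use $\mathbf{\mathrm{v}}_u \leq \alpha$ together with the total walk estimate $\sum_{u \sim z} d(u) = d(z) + 2e(N_1(z)) + e(N_1(z), N_2(z)) \leq kd(z) + k(n-1)$ derived in the proof of Lemma \ref{bounds on lambda}. Combining with the lower bound $\lambda^2 \mathbf{\mathrm{v}}_z > kn\alpha$ (from $\lambda^2 \geq kn$ and $\mathbf{\mathrm{v}}_z > \alpha$), and using the smallness of $\alpha$ from \eqref{alpha choice} to absorb error terms, the resulting inequality rearranges to the desired $d(z) \geq \alpha n/(20k)$ for $n$ large.

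For the $|L|$ bound, I invoke Nikiforov's degree-power inequality (Lemma \ref{Degree powers in graphs with a forbidden even cycle}): $\sum_u d(u)^2 \leq k(n-1)n + 2km$. Since $m \leq \mathrm{ex}(n, C_{2k+2}) \leq 8kn^{(k+2)/(k+1)} = o(n^2)$ by Lemma \ref{lemC2k}, the right-hand side is at most $(k+1)n^2$ for $n$ sufficiently large. On the other hand, the degree bound from the first part gives $\sum_{z \in L} d(z)^2 \geq |L|(\alpha n/(20k))^2$. Comparing the two inequalities yields the claimed $|L| \leq (k+1)/(\alpha/(20k))^2$.

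The main obstacle will be executing the squared-equation argument in the regime $n > 400k^3$, where the naive Perron estimate produces only $\Omega(\sqrt{n})$ and the promotion to a linear-in-$n$ bound requires delicate use of the smallness of $\alpha$ to keep the $\alpha k(n-1)$ contribution from the $S$-endpoints subdominant, together with a careful handling of the $L$-endpoint contribution against the loose cardinality bound on $|L|$.
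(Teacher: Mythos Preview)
Your plan for the second conclusion (the bound on $|L|$ via Lemma \ref{Degree powers in graphs with a forbidden even cycle}) is fine and matches the paper. The problem is in the first part: the $L$ versus $S$ split cannot produce the degree lower bound.

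Carrying out your computation, you get
\[
kn\alpha < \lambda^2 \mathbf{v}_z \;\leq\; |L|\,d(z) \;+\; \alpha\bigl(kd(z) + k(n-1)\bigr).
\]
Now suppose $d(z) < \tfrac{\alpha}{20k}n$. The $S$-contribution alone is already $\alpha k(n-1)$, which equals the left-hand side $kn\alpha$ up to $O(1)$. After subtracting, all that remains is the trivial inequality $\alpha k < (|L|+\alpha k)\,d(z)$, i.e.\ $d(z)$ is bounded below by a constant, not by a multiple of $n$. The ``smallness of $\alpha$'' cannot rescue this: both the lower bound $kn\alpha$ and the dominant error term $\alpha k(n-1)$ scale with $\alpha$, so their ratio is fixed. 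The underlying reason is that $z\in L$ only guarantees $\mathbf{v}_z>\alpha$, while $u\in S$ allows $\mathbf{v}_u$ as large as $\alpha$; there is no gap between the two thresholds to exploit.

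What the paper does instead is introduce the intermediate set $M=\{u:\mathbf{v}_u\ge \alpha/3\}$. For endpoints outside $M$ one has $\mathbf{v}_u<\alpha/3<\mathbf{v}_z/3$, and this factor of $3$, combined with the edge bound $e(N_1,N_2)\le \tfrac{2k+1}{2}n$, forces at least $0.9n\mathbf{v}_z$ of the length-two walks from $z$ to terminate in $M_2(z)$. An application of Lemma \ref{extremal path} to $H_k[N_1(z)\cup M_2(z)]$ then shows $|M_2(z)|$ is linear in $n$, contradicting the sublinear bound on $|M|$ from Lemma \ref{very loose bound on size of L}. So the missing ingredient in your argument is precisely the auxiliary set $M$ (and the companion bound on $|M|$), which creates the separation your $L/S$ split lacks.
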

\begin{proof}
For some vertex $z \in L$ such that $\mathbf{\mathrm{v}}_z = c$, consider the following second degree eigenvalue-eigenvector equations relating $\lambda^2$, $\mathbf{\mathrm{v}}$, and entries in the $z$-th row of $A^2$:  
\[knc\leq \lambda^2 c= \lambda^2 \mathbf{\mathrm{v}}_z =\sum_{u\sim z}\sum_{w\sim u} \mathbf{\mathrm{v}}_w \leq d(z)c + 2e(N_1(z)) + \sum_{u\sim z} \sum_{\substack{w\sim u\\ w\in N_2(z)}} \mathbf{\mathrm{v}}_w \leq 2kd(z) + \sum_{u\sim z} \sum_{\substack{w\sim u\\ w\in N_2(z)}} \mathbf{\mathrm{v}}_w,  \]
where the last inequality is by \eqref{N1 edge bound}. Now assume to the contrary that there is a vertex $z\in L$ with $d(z) < \frac{\alpha}{20k} n$. Substituting this into the above equation and using $\alpha < c$ since $z\in L$, we have 
\[
(k-0.1)nc < \sum_{u\sim z} \sum_{\substack{w\sim u\\ w\in N_2(z)}} \mathbf{\mathrm{v}}_w.
\]
Next we show that many of the terms in the double sum come from vertices in $M_2(z)$ via the following claim.

\begin{claim}
There are at least $0.9nc$ terms $\mathbf{\mathrm{v}}_w$ with $w\in M_2$ in the sum \[\sum_{u\sim z} \sum_{\substack{w\sim u\\ w\in N_2(z)}} \mathbf{\mathrm{v}}_w.\]
\end{claim}
\begin{proof}
Assume to the contrary that there are less than $0.9nc$ terms $\mathbf{\mathrm{v}}_w$ where $w \in M_2$. As $\mathbf{\mathrm{v}}_w\leq 1$ for such $w$, 
\[
(k-0.1)nc < \sum_{u\sim z} \sum_{\substack{w\sim u\\ w\in N_2(z)}} \mathbf{\mathrm{v}}_w = \sum_{u\sim z} \sum_{\substack{w\sim u\\ w\in M_2}} \mathbf{\mathrm{v}}_w +  \sum_{u\sim z} \sum_{\substack{w\sim u\\ w\in S_2\setminus M_2}} \mathbf{\mathrm{v}}_w < 0.9nc + e(N_1, S_2\setminus M_2)\frac{\alpha}{3},
\]
and so 
\[
(k-1)nc <  e(N_1, S_2\setminus M_2)\frac{\alpha}{3}.
\]
From $\alpha < c$ and \eqref{N1 to N2 edge bound}, we have that
\[
(k-1)n < \frac{2k+1}{2} \frac{1}{3} n,
\]
a contradiction for $k\geq 2$. This proves our claim. \end{proof}

Therefore, $e(N_1(z), M_2(z)) \geq 0.9 n c > 0.9 n\alpha$. Because $H_k[N_1(z) \cup M_2(z)]$ contains no $P_{2k+3}$, by Lemma \ref{extremal path}, we deduce that $0.9 n \alpha \leq \dfrac{2k+1}{2}|N_1(z) \cup M_2(z)| < \dfrac{2k+1}{2}\left(\dfrac{n\alpha}{20k} + |M_2(z)|\right)$. Thus,
\[
|M_2(z)| > \left(.9\alpha - \frac{(2k+1)\alpha}{40k}\right)\left(\dfrac{2}{2k+1}\right)n.
\]

This contradicts the bound in Lemma \ref{very loose bound on size of L} for $n$ sufficiently large. Thus, for $n$ sufficiently large we have that $d(z) \geq \frac{\alpha}{20k}n$ for all $z\in L$. Combined with Lemma \ref{Degree powers in graphs with a forbidden even cycle} this gives us that $|L| \leq \frac{k+1}{(\alpha/20k)^2}$.\end{proof}

We now refine the lower bound on the degrees of vertices in $L'$. 

\begin{lemma}
\label{mindegrees of vertices in L}
If $z$ is a vertex of $L'$ with $\mathbf{\mathrm{v}}_z = c$, then $d(z) \geq cn - \epsilon n$.
\end{lemma}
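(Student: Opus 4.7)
The plan is to argue by contradiction: suppose there is some vertex $z \in L'$ with $\mathbf{v}_z = c \geq \eta$ but $d(z) < cn - \epsilon n$. Setting $B := V(H_k) \setminus (N(z) \cup \{z\})$, we have $|B| > (1 - c + \epsilon)n - 1$, a linear-in-$n$ set of non-neighbors of $z$.

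My primary tool would be the two-step eigenvalue identity
\[
\lambda^2 c = d(z)\, c + \sum_{w \in N_1(z)} d_{N_1(z)}(w)\,\mathbf{v}_w + \sum_{w \in N_2(z)} |N(w) \cap N_1(z)|\,\mathbf{v}_w,
\]
which I would bound from above by splitting each of the last two sums into pieces coming from $L$ and from $S$. For the $S$-pieces, the bound $\mathbf{v}_w \leq \alpha$ together with \eqref{N1 edge bound} and \eqref{N1 to N2 edge bound} controls the contribution by $O(\alpha(d(z)+n))$; for the $L$-pieces, the constant upper bound $|L| \leq 400k^2(k+1)/\alpha^2$ from Lemma \ref{degrees of L} together with $|N(w) \cap N_1(z)| \leq d(z)$ controls them by $O(|L|\cdot d(z))$. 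Collecting everything should yield an inequality of the form $\lambda^2 c \leq (c + \gamma)\,d(z) + \delta n$ where $\gamma,\delta$ are small constants in $\alpha,\epsilon,\eta,k$. Combined with the lower bound $\lambda^2 \geq k(n-k) + (k-1)\sqrt{k(n-k)}$ from Lemma \ref{bounds on lambda} (since $\lambda(H_k) \geq \lambda(S_{n,k})$), this should produce a linear-in-$n$ lower bound on $d(z)$; the numerical choices in \eqref{eta choice}--\eqref{alpha choice} are designed precisely so that this lower bound is at least $cn - \epsilon n$, yielding the desired contradiction.

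The main obstacle I anticipate is controlling the $L$-contributions to the two inner sums, since each vertex $w \in L$ can contribute weight up to $\mathbf{v}_w \leq 1$ (not $\alpha$) and there may a priori be many walks of length two through $L$. Making these pieces fit inside the $\epsilon n$ slack requires exploiting both the constant-size cap on $|L|$ and the smallness of $\alpha$ forced by \eqref{alpha choice}, and the chain $\alpha \ll \epsilon \ll \eta$ appears calibrated exactly for this purpose. Should this direct bound fall short, a backup strategy is a one-edge perturbation argument: pick a low-weight edge $\{u,v\}$ of $H_k$ (with both endpoints in $S$) and a suitable non-neighbor $y$ of $z$ with $\mathbf{v}_y$ comparatively large (which must exist since $|B|$ is linear in $n$), then form $H' := H_k - \{u,v\} + \{z,y\}$. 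The Rayleigh quotient for $\mathbf{v}$ strictly increases whenever $\mathbf{v}_z\mathbf{v}_y > \mathbf{v}_u\mathbf{v}_v$, and one would then show that, for a careful choice of $\{u,v\}$, the graph $H'$ contains no $C_{2k+2}$ (by deleting an edge that lies on every length-$(2k+1)$ path from $z$ to $y$ in $H_k$), contradicting the extremality of $H_k$.
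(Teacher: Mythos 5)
There is a genuine gap, and it sits exactly where you flagged the ``main obstacle'': the $L$-contribution to the two-step sum cannot be absorbed into the slack by counting alone. In your identity the term $\sum_{w \in L_1 \cup L_2} |N(w) \cap N_1(z)|\,\mathbf{\mathrm{v}}_w$ can genuinely be as large as $|L|\,d(z)$, and Lemma \ref{degrees of L} only caps $|L|$ by $\frac{k+1}{(\alpha/20k)^2} = \frac{400k^2(k+1)}{\alpha^2}$ --- a constant, but one that is enormous compared to $k$ and that \emph{grows} as $\alpha$ shrinks, so the chain $\alpha \ll \epsilon \ll \eta$ of \eqref{eta choice}--\eqref{alpha choice} works against you here rather than for you. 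Consequently the inequality you aim for, $\lambda^2 c \leq (c+\gamma)d(z) + \delta n$ with $\gamma,\delta$ small, is unattainable: what you actually get is $knc \leq (c + |L|)\,d(z) + O(\alpha n)$, which is already satisfied by $d(z)$ of order $kn/|L| \ll \epsilon n$, so no contradiction follows. The missing idea --- and the heart of the paper's proof --- is to \emph{not} bound this term numerically but to keep it as $e(S_1, L_1 \cup L_2)$ and use $C_{2k+2}$-freeness a second time, structurally: assuming $d(z) \leq (c-\epsilon)n$, the eigenvalue inequality forces $e(S_1, L_1\cup L_2) \geq (k-1)nc + \frac{\epsilon^2 n}{2}$, which exceeds $(k-1)|S_1|$ by a linear margin; hence at least $\delta n$ vertices of $S_1$ have $\geq k$ neighbours in the constant-size set $L$, and pigeonholing over the $\binom{|L|}{k}$ possible $k$-sets yields $k$ vertices of $L$ with a common neighbourhood of linear size inside $S_1 \subseteq N_1(z)$. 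Alternating between these $k$ vertices and their common neighbours gives a $P_{2k+1}$ with both endpoints in $N_1(z)$, closing through $z$ to a $C_{2k+2}$ --- the contradiction. No calibration of constants substitutes for this step.

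Your backup perturbation argument also does not close. First, the Rayleigh gain is not assured: Lemma \ref{minimum eigenvector entries} only guarantees $\mathbf{\mathrm{v}}_z \mathbf{\mathrm{v}}_y \geq \eta/\sqrt{2k(n-1)}$, while an edge $\{u,v\}$ with both endpoints in $S$ may have $\mathbf{\mathrm{v}}_u\mathbf{\mathrm{v}}_v$ as large as $\alpha^2$, a constant; nothing forces the existence of an edge with product weight $o(n^{-1/2})$, and in graphs close to $S_{n,k}$ every edge meets a vertex of weight near $1$, so all products may be comparable to or larger than $\mathbf{\mathrm{v}}_z\mathbf{\mathrm{v}}_y$. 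Second, and more fundamentally, the claim that one can delete a single edge lying on \emph{every} $z$--$y$ path of order $2k+2$ is unjustified: there may be many internally disjoint such paths (for instance, through distinct high-degree vertices of $L'$), in which case no single edge deletion makes $H_k + \{z,y\}$ into a $C_{2k+2}$-free graph. So neither route of the proposal, as written, yields the lemma.
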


\begin{proof}
Given $z\in L$ with $\mathbf{\mathrm{v}}_z = c$, observe that
\begin{align*}
    knc &\leq \lambda^2 c= \sum_{u\sim z}\sum_{w\sim u} \mathbf{\mathrm{v}}_w = d(z) c + \sum_{u \sim z} \sum_{\substack{w \sim u\\ w \not= z}} \mathbf{\mathrm{v}}_w\\
    &\leq d(z)c + 
    \left(\sum_{\substack{u\sim z\\ u\in S_1}} \sum_{\substack{w\sim u \\ w\in L_1\cup L_2}} \mathbf{\mathrm{v}}_w \right)+ 2e(S_1)\alpha + 2e(L_1) + e(S_1, L_1)\alpha + e(N_1, S_2)\alpha.
\end{align*}
Since $N_1$ is $P_{2k+1}$-free and the bipartite graph between $N_1$ and $N_2$ is $P_{2k+3}$-free,
\begin{align*}
2e(S_1) &\leq 2e(N_1)  \leq (2k-1)n\\
    e(L_1, S_1)& \leq e(N_1) < kn,\\
    e(N_1, S_2)& < 2kn.
\end{align*}

Using these and Lemma \ref{degrees of L}, we have 
\[
2e(S_1)\alpha + 2e(L_1) \leq 2e(N_1)\alpha + 2 \binom{|L|}{2} \leq (2k-1)n\alpha + |L|(|L|-1) < 2kn\alpha,
\]
where the last inequality holds for $n$ large enough.

Hence, we have 
\begin{equation}\label{upperbound on lambdasq 2}
    knc < d(z)c + \left(\sum_{\substack{u\sim z\\ u\in S_1}} \sum_{\substack{w\sim u \\ w\in L_1\cup L_2}} \mathbf{\mathrm{v}}_w \right)  + 5kn\alpha \leq d(z)c + e(S_1, L_1\cup L_2) + 5kn \alpha < d(z)c + e(S_1, L_1\cup L_2) + \frac{\epsilon^2 n}{2},
\end{equation}
by the choice of $\alpha$ in \eqref{alpha choice}.

If $d(z) \leq (c - \epsilon)n$ 
, then 
\begin{equation}
\label{degree bound using edges in some bipartite graph involving vertices of L 2}
(k-c + \epsilon) n c \leq (kn - d(z)) c \leq e(S_1, L_1 \cup L_2) + \frac{\epsilon^2 n}{2}.
\end{equation}
Since $z\in L'$ we have $c\geq \epsilon$. Rearranging and using $\epsilon \leq c \leq 1$, we get that
\begin{equation}
\label{degree bound using edges in some bipartite graph involving vertices of L 3}
e(S_1, L_1\cup L_2) \geq (k-1)nc + \frac{\epsilon^2 n}{2}.
\end{equation}

We will show that $H_k[S_1, L_1 \cup L_2]$ contains a $P_{2k+1}$ with both endpoints in $S_1$, thus contradicting the fact that $H_k$ is $C_{2k+2}$-free. To show this we prove the following claim.

\begin{claim}
\label{claim in many edges on (L, MUT)}
If $\delta:= \frac{\epsilon(\alpha/20k)^2}{k+1}$, then there are at least $\delta n$ vertices inside $S_1$ with degree at least $k$ in $H_k[S_1, L_1 \cup L_2]$.
\end{claim}

\begin{proof}
Assume to the contrary that at most $\delta n$ vertices in $S_1$ have degree at least $k$ in $H_k[S_1, L_1 \cup L_2]$. Then $e(S_1, L_1 \cup L_2) < (k-1)|S_1| + |L|\delta n \leq (k-1)(c-\epsilon)n +\epsilon n$, because $|S_1| \leq d(z)$ and by Lemma \ref{degrees of L}. Combining this with \eqref{degree bound using edges in some bipartite graph involving vertices of L 3} gives $ (k-1)nc- (k-2)n \epsilon > e(S_1, L_1\cup L_2) \geq (k-1)nc + \frac{\epsilon^2 n}{2}$, a contradiction.
\end{proof}

Hence, there is some subset of vertices $B \subset S_1$ such that any vertex in $B$ has degree at least $k$ in $H_k[S_1, L_1 \cup L_2]$ and $|B| = \delta n$. Since there are $\binom{|L|}{k}$ options for every vertex in $B$ to choose a set of $k$ neighbors from, we have that there is some set of $k$ vertices in $L_1 \cup L_2$ with at least $\delta n / \binom{|L|}{k}$ common neighbours in $B$. Therefore, by Lemma \ref{extremal path} and Lemma \ref{degrees of L}, for $n$ sufficiently large we have a path on $2k+1$ vertices with both end points in the common neighbourhood contained in $B$, a contradiction.
\end{proof}

Thus, for the vertex $x$ such that $\mathbf{\mathrm{v}}_x = 1$, we have $d(x) \geq n - \epsilon n$ and $N_1(x)$ contains all but at most $\epsilon n$ many vertices. Since every vertex in $L'$ has degree more than $\epsilon n$ (by the definition of $L'$ and Lemma \ref{mindegrees of vertices in L}), this also gives that $L '\setminus \{x\} \subset L_1(x) \cup L_2(x)$. The arguments in the proof of Lemma \ref{mindegrees of vertices in L} also allow us to show that all vertices of $L'$ have degrees close to $n$
and thus obtain $|L'| = k$.

\begin{lemma}
\label{neighborhood of x, L}
For any vertex $z\in L'$ with $\mathbf{\mathrm{v}}_z \geq 1-\epsilon$, we have $(k - 2\epsilon)n \leq e(S_1, L) \leq (k+\epsilon)n $.   
\end{lemma}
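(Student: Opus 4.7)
The plan is to handle the two inequalities separately: the lower bound via the Perron eigenvector equation at $z$, and the upper bound via the $C_{2k+2}$-free condition combined with Lemma~\ref{Many edges in a bipartition}.

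\emph{Setup.} Set $c := \mathbf{v}_z$, so $c \in [1-\epsilon,1]$. Lemma~\ref{mindegrees of vertices in L} then gives $d(z) \geq (c-\epsilon)n \geq (1-2\epsilon)n$, hence $|N_{\geq 2}(z)| \leq 2\epsilon n$. Since $S_1 \subseteq N_1(z)$ has no edges to any vertex at distance $\geq 3$ from $z$, the decomposition
\begin{equation*}
e(S_1,L) \;=\; e(S_1,L_1) + e(S_1,L_2) + |S_1|
\end{equation*}
holds, where the last term counts the $|S_1|$ edges from $z$ to $S_1$, and $|S_1| = d(z) - |L_1|$ with $|L_1| \leq |L| = O(1)$ by Lemma~\ref{degrees of L}.

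\emph{Lower bound.} Reusing the expansion of $\lambda^2 \mathbf{v}_z$ from the proof of Lemma~\ref{mindegrees of vertices in L}, namely $\lambda^2 c \leq d(z)c + e(S_1, L_1 \cup L_2) + \epsilon^2 n/2$ (where the $\epsilon^2 n/2$ absorbs the $5kn\alpha$ error via the choice of $\alpha$), and combining with $\lambda^2 \geq kn$, I obtain $(kn - d(z))c - \epsilon^2 n/2 \leq e(S_1, L_1 \cup L_2)$. Adding $|S_1| = d(z) - |L_1|$ to both sides gives
\begin{equation*}
e(S_1,L) \;\geq\; knc + d(z)(1-c) - |L_1| - \tfrac{\epsilon^2 n}{2}.
\end{equation*}
Substituting $c \geq 1-\epsilon$, $d(z) \geq (c-\epsilon)n$, and $|L_1| = O(1)$, a short calculation delivers the required $(k-2\epsilon)n$ for $n$ sufficiently large.

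\emph{Upper bound.} The subgraph $H_k[L_1 \cup S_1]$ cannot contain a path on $2k+1$ vertices with both endpoints in $L_1$: since $L_1 \subseteq N_1(z)$ and $z$ is disjoint from any such path, appending $z$ would close a forbidden $C_{2k+2}$. Lemma~\ref{Many edges in a bipartition}(B) applied with $U = L_1$ and $W = S_1$ then gives $2e(L_1) + e(L_1,S_1) \leq (2k-1)|L_1| + k|S_1|$, so $e(L_1,S_1) \leq k|S_1| + O(1)$. For $e(S_1, L_2)$ I would use $|L_2| \leq |L| = O(1)$ combined with a finer estimate on the individual $S_1$-degree of each $v \in L_2$ (obtained by applying Lemma~\ref{Many edges in a bipartition} or Lemma~\ref{lemC2k} to the bipartite subgraph $H_k[L_2, S_1]$) to show that this contribution is $o(n)$ relative to the target. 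Assembling the three pieces then yields $e(S_1,L) \leq (k+\epsilon)n$.

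\emph{Main obstacle.} The more delicate half is the upper bound: combining $e(L_1,S_1) \leq k|S_1| + O(1)$ naively with the $|S_1| \leq n-1$ edges to $z$ overshoots to $(k+1)n$, a full factor of $n$ larger than the claimed bound. Closing this gap requires exploiting the structure forced by extremality --- in particular, that the path-lemma bound is tight only when $|L_1|$ is nearly $k$, and that in that regime the eigenvector equation pins the $S_1$-degrees of $L_1$-vertices close to the value $(n-k)$ attained in the conjectured extremal graph $S_{n,k}$, rather than the trivial upper bound $n$. Balancing these two constraints is the technical heart of the proof.
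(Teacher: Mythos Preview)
Your lower bound is essentially the paper's argument: both refine the second-moment inequality $knc \leq d(z)c + e(S_1, L_1\cup L_2) + \tfrac{\epsilon^2 n}{2}$, fold in the $|S_1|$ edges to $z$, and conclude.

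The upper bound, however, has a genuine gap --- and you have already put your finger on it. The direct bound $e(S_1,L_1) \leq k|S_1| + O(1)$ from Lemma~\ref{Many edges in a bipartition}(B), together with the $|S_1|$ edges to $z$, only yields $(k+1)n$, a full factor of $n$ too large. Your suggested fix (that extremality plus the eigenvector equation pins the $S_1$-degrees of $L_1$-vertices near $n-k$) is not made concrete, and at this point in the argument no such control is available: Lemma~\ref{precise size of L} comes \emph{after} the present lemma and depends on it. Your claim that $e(S_1,L_2) = o(n)$ is also unjustified: each vertex of $L_2$ has degree at least $\tfrac{\alpha}{20k}n$ by Lemma~\ref{degrees of L}, and those neighbors can sit in $S_1$.

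The paper's upper-bound argument avoids the overshoot entirely by arguing by contradiction. Suppose $e(S_1,L) > (k+\epsilon)n$. Since every vertex of $S_1$ already contributes one edge to $z$, a counting argument shows that at least $\delta n$ vertices of $S_1$ (with $\delta = \epsilon(\alpha/20k)^2/(k+1)$) must have $k$ or more neighbors in $L\setminus\{z\}$. Because $|L| = O(1)$, pigeonhole over $\binom{|L|}{k}$ choices yields a fixed $k$-subset of $L\setminus\{z\}$ with $\Omega(n)$ common neighbors in $S_1$; this complete bipartite piece contains a $P_{2k+1}$ with both endpoints in $S_1 \subset N_1(z)$, and closing through $z$ gives a forbidden $C_{2k+2}$. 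This is exactly the pigeonhole mechanism used at the end of Lemma~\ref{mindegrees of vertices in L}, and it is the idea your proposal is missing.
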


\begin{proof}
To obtain the lower bound we refine \eqref{upperbound on lambdasq 2}. Using $1-\epsilon \leq \mathbf{\mathrm{v}}_z \leq 1$ and $d(z) \leq n$, we get 
\[
kn(1-\epsilon) < d(z) + e(S_1, L_1\cup L_2) + \frac{\epsilon^2 n}{2} = e(S_1, L) + \frac{\epsilon^2 n}{2}.
\]

Thus $e(S_1, L) > (1-2\epsilon)kn$.

To obtain the upper bound, assume to the contrary that $e(S_1, L) > kn + \epsilon n$. 
We will show that $H_k[S_1, L \setminus \{z\}]$ contains a $P_{2k+1}$ with both endpoints in $S_1$, thus contradicting the fact that $H_k$ is $C_{2k+2}$-free. To show this we prove the following claim.

\begin{claim}
\label{claim in many edges on (L, MUT) modified}
Let $\delta:= \frac{\epsilon(\alpha/20k)^2}{k+1}$. Then there are $\delta n$ vertices inside $S_1$ with degree at least $k$ in $H_k[S_1, L \setminus \{z\}]$.
\end{claim}

\begin{proof}
Assume to the contrary that at most $\delta n$ vertices of $S_1$ have degree at least $k$ in $H_k[S_1, L \setminus \{z\}]$. Then, $e(S_1, L \setminus \{z\}) < (k-1)|S_1| + |L| \delta n) \leq (k-1)n + |L| \delta n$, because $|S_1| \leq d(z)$. This contradicts our assumption that $e(S_1, L) \geq kn + \epsilon n$.
\end{proof}
Hence, there is some subset of vertices $B \subset S_1$ such that any vertex in $B$ has degree at least $k$ in $H_k[S_1, L \setminus \{z\}]$ and $|B| = \delta n$. Since there are only $\binom{|L|}{k}$ options for every vertex in $B$ to choose a set of $k$ neighbors from, we have that there is some set of $k$ vertices in $L \setminus \{x\}$ with at least $\delta n / \binom{|L|}{k}$ common neighbours in $B$. Therefore, by Lemma \ref{extremal path} and Lemma \ref{degrees of L}, for $n$ sufficiently large we have a path on $2k+1$ vertices with both end points in the common neighbourhood contained in $B$, a contradiction.\end{proof}


\begin{lemma}
\label{precise size of L}
For all vertices $z\in L'$, we have 
$d(z) \geq \left(1-\frac{1}{8k^3}\right)n$ and $\mathbf{\mathrm{v}}_z \geq 1- \frac{1}{16k^3}$. Moreover, $|L'| = k$.
\end{lemma}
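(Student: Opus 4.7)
The plan is to establish the lemma in three stages: the Perron-weight bound on $\mathbf{\mathrm{v}}_z$, the degree bound as a consequence, and finally $|L'|=k$.

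For the Perron weight, I would show $\mathbf{\mathrm{v}}_z \geq 1 - 1/(16k^3)$ for every $z \in L'$. Fix $z \in L'$ with $c:=\mathbf{\mathrm{v}}_z$. Revisiting the inequality \eqref{upperbound on lambdasq 2} from the proof of Lemma \ref{mindegrees of vertices in L},
\[
knc \leq \lambda^2 c \leq d(z)c + e(S_1(z),L_1(z)\cup L_2(z)) + \tfrac{\epsilon^2 n}{2},
\]
the new input is the sharp bound $e(S_1(x),L)\leq (k+\epsilon)n$ from Lemma \ref{neighborhood of x, L} applied at $x$ (which trivially satisfies $\mathbf{\mathrm{v}}_x = 1 \geq 1-\epsilon$). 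Since $|V\setminus N(x)|\leq \epsilon n$ and $|L|$ is bounded by Lemma \ref{degrees of L}, the symmetric difference $S_1(z)\triangle S_1(x)$ has size $O(\epsilon n)$, so $e(S_1(z),L)\leq (k+O(\epsilon))n$. Assuming for contradiction that $c<1-1/(16k^3)$, the tightened lower bound $\lambda^2 \geq kn+(k-1)\sqrt{kn}$ coming from Lemma \ref{bounds on lambda} together with $d(z) \geq cn - \epsilon n$ from Lemma \ref{mindegrees of vertices in L} forces $e(S_1(z),L\setminus\{z\})$ above a threshold that produces, via the common-neighborhood and path-finding argument of Lemmas \ref{mindegrees of vertices in L} and \ref{neighborhood of x, L}, a $P_{2k+1}$ in $H_k[S_1(z),L\setminus\{z\}]$ with both endpoints in the common neighbourhood of some $k$-subset of $L\setminus\{z\}$; closing this path through $z$ then yields the forbidden $C_{2k+2}$.

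The degree bound follows at once from Lemma \ref{mindegrees of vertices in L}: $d(z)\geq \mathbf{\mathrm{v}}_z\, n-\epsilon n\geq (1-1/(16k^3) - \epsilon)n\geq (1 - 1/(8k^3))n$, using $\epsilon<1/(16k^3)$ from \eqref{epsilon choice}. For the upper bound $|L'|\leq k$, if $|L'|\geq k+1$, any $k+1$ vertices $z_1,\ldots,z_{k+1}\in L'$ have a common neighbourhood of size at least $n-(k+1)n/(8k^3)>k+1$ for large $n$, yielding $K_{k+1,k+1}\supseteq C_{2k+2}$, a contradiction. For $|L'|\geq k$, I apply the squared eigenvalue-eigenvector identity at $x$ and split by whether $u\in L'$:
\[
\lambda^2 = \sum_{u\in V}(A^2)_{xu}\mathbf{\mathrm{v}}_u\leq (1-\eta)\sum_{u\in L'}(A^2)_{xu}+\eta\sum_{u\in V}(A^2)_{xu}\leq (1-\eta)|L'|n+2kn\eta,
\]
using $(A^2)_{xu}\leq n$ and the row-sum estimate $\sum_u(A^2)_{xu}=\sum_{w\sim x}d(w)\leq 2kn$ from \eqref{eqn precursor to bounds on spectral radius using bipartition edges}. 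Combined with $\lambda^2\geq kn$, this yields $|L'|\geq k(1-2\eta)/(1-\eta)$, and $\eta<1/(k+1)$ from \eqref{eta choice} is equivalent to $(1-2\eta)/(1-\eta)>(k-1)/k$, so $|L'|>k-1$ and hence $|L'|\geq k$.

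The main obstacle is the first step, where the precise extraction of $\mathbf{\mathrm{v}}_z \geq 1 - 1/(16k^3)$ demands careful tracking of error terms when comparing $S_1(z)$ with $S_1(x)$ and handling the slack in $\lambda^2 c\leq d(z)c+e(S_1(z),L_1(z)\cup L_2(z))+\epsilon^2n/2$; the constants in \eqref{eta choice}-\eqref{alpha choice} are engineered so that the thresholds line up.
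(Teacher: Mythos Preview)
Your steps 2, 3, and 4 are fine. The degree bound and the upper bound $|L'|\le k$ match the paper, and your argument for $|L'|\ge k$ via the row-sum splitting $\lambda^2\le (1-\eta)\sum_{u\in L'}(A^2)_{xu}+\eta\sum_u(A^2)_{xu}$ is a clean alternative to the paper's use of \eqref{upperbound on lambdasq 2} and Lemma~\ref{neighborhood of x, L}; both reduce to $\eta<1/(k+1)$.

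Step 1, however, has a genuine gap. Working the eigenvector equation at $z$ gives $(\lambda^2-d(z))c\le e(S_1(z),L\setminus\{z\})+\tfrac{\epsilon^2 n}{2}$, but this inequality carries the factor $c$ on the left, so the assumption $c<1-1/(16k^3)$ only \emph{weakens} the resulting lower bound on $e(S_1(z),L\setminus\{z\})$; it cannot force that quantity above the path-finding threshold $(k-1)|S_1(z)|+|L|\delta n$. Concretely, combining your lower bound with the no-path upper bound yields $(k-1)c\le (k-1)+O(\epsilon)$, which is vacuous for every $c\le 1$. The $\sqrt{kn}$ refinement of $\lambda^2$ is lower order than the $O(\epsilon n)$ slack and does not rescue the argument.

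The paper instead applies \eqref{upperbound on lambdasq 2} at the vertex $x$ (where $\mathbf{v}_x=1$), and isolates the contribution of the single vertex $z\in L$ in the second-neighbourhood sum: the $|N_1(x)\cap N_1(z)|$ edges from $S_1(x)$ to $z$ each contribute only $\mathbf{v}_z$ rather than $1$. Together with the upper bound $e(S_1(x),L)\le(k+\epsilon)n$ from Lemma~\ref{neighborhood of x, L}, this produces
\[
kn<(k+\epsilon)n-\frac{|N_1(x)\cap N_1(z)|}{16k^3}+\tfrac{\epsilon^2 n}{2},
\]
hence $|N_1(x)\cap N_1(z)|<32k^3\epsilon n$, contradicting $|N_1(x)\cap N_1(z)|\ge(\eta-2\epsilon)n$ via the choice $\epsilon<\eta/(32k^3+2)$. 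The essential idea you are missing is to exploit the deficit $1-\mathbf{v}_z$ as a \emph{weight loss} inside the equation at $x$, not as a multiplicative factor on the equation at $z$.
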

\begin{proof}
If we show that every vertex $z \in L'$ has Perron entry $\mathbf{\mathrm{v}}_z \geq 1- \frac{1}{16k^3}$, then it follows from Lemma \ref{mindegrees of vertices in L} and \eqref{epsilon choice} that $d(z) \geq \left(1 - \frac{1}{8k^3}\right)n$. If all vertices in $L'$ have degree at least $n-\frac{n}{8k^3}$, then $|L'| \leq k$, else there exists a $K_{k+1, k+1}$ in $H_k$, a contradiction. Also, if $|L'| \leq k-1$, then by \eqref{upperbound on lambdasq 2} and Lemma \ref{neighborhood of x, L}, we have
\[kn = kn \mathbf{\mathrm{v}}_x \leq \lambda^2 \leq e(S_1(x), L'(x)) + e(S_1(x), L(x)\setminus L'(x))\eta + \frac{\epsilon^2n}{2}  \leq (k-1)n + (k+\epsilon)n \cdot \eta + \frac{\epsilon^2 n}{2},\]
a contradiction by \eqref{eta choice} and \eqref{epsilon choice}. Hence, all we need to show is that every vertex in $L'$ has Perron entry at least $1 - \frac{1}{16k^3}$.

By way of contradiction, assume that $z\in L'$ and $\mathbf{\mathrm{v}}_z < 1 - \frac{1}{16k^3}$. Refining \eqref{upperbound on lambdasq 2} applied to the vertex $x$ we have 
\begin{align*}
kn &< e(S_1(x), L_1(x) \setminus \{z\}) + |N_1(x)\cap N_1(z)|\mathbf{\mathrm{v}}_z + \frac{\epsilon^2 n}{2}\\
&\leq (k+\epsilon)n - |N_1(x)\cap N_1(z)| + \left(1-\frac{1}{16k^3}\right)|N_1(x)\cap N_1(z)| + \frac{\epsilon^2 n}{2}\\
& = kn + \epsilon n + \frac{\epsilon^2 n}{2} - \frac{|N_1(x)\cap N_1(z)|}{16k^3},
\end{align*}
where the second inequality is by Lemma \ref{neighborhood of x, L} and the bound on $\mathbf{\mathrm{v}}_z$. Therefore, we have that
\[
\frac{|N_1(x)\cap N_1(z)|}{16k^3} < 2\epsilon n.
\]
On the other hand, since $z\in L'$ we have $\mathbf{\mathrm{v}}_z \geq \eta$ and so by Lemma \ref{mindegrees of vertices in L} we have $|N_1(x)\cap N_1(z)| \geq (\eta - 2\epsilon)n$. Combining the two inequalities is a contradiction by \eqref{epsilon choice}.
\end{proof}

Now that we have $|L'| = k$ and every vertex in $L'$ has degree at least $\left(1-\frac{1}{8k^3}\right)n$, it follows that the common neighborhood of $L'$ has size at least $\left(1-\frac{1}{8k^2}\right)n$. That is, there are at most $\frac{n}{8k^2}$ vertices not adjacent to all of $L'$. Call this set of ``exceptional vertices" $E$. That is,
\[
E: = \{v\in V(H_k) \setminus L': |N_1(v) \cap L'| \leq k-1\}.
\]

Let $R = V(H_k) \setminus (L' \cup E)$ be the remaining vertices. So we have that $V(H_k)$ is the disjoint union of $L'$, $R$, and $E$ with $|L'| = k$ and $|E| \leq \frac{n}{8k^2}$. In the next two lemmas we will show that $E = \emptyset$ and this will allow us to prove Theorems \ref{thm spectral turan even cycles} and \ref{thm spectral turan subsequent cycles}. Note that because $R$ has size larger than $2k+2$, adding a new vertex adjacent  only to the vertices in $L'$ cannot create a $C_{2k+2}$, otherwise there would have already been one. 

\begin{lemma}
\label{minimum eigenweight in the neighborhood of a vertex}
For any vertex $u \in V(H_k)$, the Perron weight in the neighborhood of $u$ satisfies  $\sum_{w \sim u}\mathbf{\mathrm{v}}_w \geq k - \frac{1}{16k^2}$. 
\end{lemma}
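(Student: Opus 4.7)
The plan is a vertex-rewiring argument exploiting the remark immediately preceding the lemma. I would split into two cases based on whether $u \in L'$.

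If $u \in L'$, then Lemma \ref{precise size of L} gives $\mathbf{\mathrm{v}}_u \geq 1 - \frac{1}{16k^3}$ and Lemma \ref{bounds on lambda} gives $\lambda(H_k) \geq \sqrt{kn}$. The eigenvalue equation then yields
\[
\sum_{w \sim u}\mathbf{\mathrm{v}}_w = \lambda(H_k)\mathbf{\mathrm{v}}_u \geq \sqrt{kn}\left(1 - \frac{1}{16k^3}\right),
\]
which for $n$ large easily exceeds $k - \frac{1}{16k^2}$, so the bound is trivial in this case.

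For the main case $u \notin L'$, I would argue by contradiction and assume $\sum_{w \sim u}\mathbf{\mathrm{v}}_w < k - \frac{1}{16k^2}$. Construct $H_k^{\ast}$ on the same vertex set as $H_k$ by deleting every edge of $H_k$ incident to $u$ and inserting the $k$ edges joining $u$ to each vertex of $L'$. Any $C_{2k+2}$ through $u$ in $H_k^{\ast}$ would have to enter and leave $u$ through two vertices $a,b \in L'$, producing a path $a \to b$ of length $2k$ inside $H_k \setminus \{u\}$; since the common neighborhood of $L'$ has size at least $n - k - \frac{n}{8k^2}$, I can pick a vertex $v$ in this common neighborhood that avoids the path, and substituting $v$ for $u$ produces a $C_{2k+2}$ already contained in $H_k$, contradicting $C_{2k+2}$-freeness of $H_k$. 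Hence $H_k^{\ast}$ is $C_{2k+2}$-free.

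I would finish by comparing Rayleigh quotients against the Perron vector $\mathbf{\mathrm{v}}$ of $H_k$:
\[
\mathbf{\mathrm{v}}^T A(H_k^{\ast})\mathbf{\mathrm{v}} - \mathbf{\mathrm{v}}^T A(H_k)\mathbf{\mathrm{v}} = 2\mathbf{\mathrm{v}}_u\left(\sum_{l \in L'}\mathbf{\mathrm{v}}_l - \sum_{w \sim u}\mathbf{\mathrm{v}}_w\right).
\]
By Lemma \ref{precise size of L} the first sum is at least $k\left(1 - \frac{1}{16k^3}\right) = k - \frac{1}{16k^2}$, which by the standing assumption strictly exceeds the second sum; combined with $\mathbf{\mathrm{v}}_u > 0$, this gives $\lambda(H_k^{\ast}) > \lambda(H_k)$, contradicting the extremality of $H_k$. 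The only step that is not bookkeeping is verifying the $C_{2k+2}$-freeness of $H_k^{\ast}$, which is precisely the content of the remark, so I expect no real obstacle.
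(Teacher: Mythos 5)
Your proposal is correct and follows essentially the same route as the paper: the paper likewise assumes the bound fails, rewires $u$ to be adjacent exactly to $L'$, deduces $\lambda(H_k^*) > \lambda(H_k)$ from Lemma \ref{precise size of L} and the Rayleigh quotient, and gets $C_{2k+2}$-freeness of $H_k^*$ from the remark preceding the lemma (whose substitution argument you spell out explicitly). The only cosmetic difference is that your separate case $u \in L'$ is handled in the paper by observing that the contrary assumption, combined with $\lambda \mathbf{\mathrm{v}}_u \geq \sqrt{kn}\,\mathbf{\mathrm{v}}_u$, already forces $u \notin L'$.
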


\begin{proof}
Assume to the contrary that there exists a vertex $u$ with $\sum_{w \sim u}\mathbf{\mathrm{v}}_w < k - \frac{1}{16k^2}$. Note that because $\sum_{w \sim u}\mathbf{\mathrm{v}}_w=\lambda \mathbf{\mathrm{v}}_u \geq \sqrt{kn}\mathbf{\mathrm{v}}_u$, we have that $u\not \in L'$. Now modify the neighborhood of $u$ by deleting all the edges adjacent to $u$ and joining $u$ to all the vertices of $L'$. Call the resultant graph, $H_k^*$. The neighborhood of $u$ in $H_k^*$ has Perron weight at least $k - \frac{1}{16k^2}$ by Lemma \ref{precise size of L} thus, $\lambda(H_k^*) > \lambda(H_k)$ by the Rayleigh quotient. Moreover, $H_k^*$ does not contain any $C_{2k+2}$, a contradiction.
\end{proof}

With this we may show that $E$ is empty.

\begin{lemma}\label{E empty set}
The set $E$ is empty and $H_k$ contains the complete bipartite graph $K_{k, n-k}$.
\end{lemma}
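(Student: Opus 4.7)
Proof plan:

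I argue by contradiction. Suppose $E$ is non-empty and pick $u \in E$. My plan is to construct a $C_{2k+2}$-free graph $H_k^*$ with $\lambda(H_k^*) > \lambda(H_k)$, contradicting the extremality of $H_k$. Define $H_k^*$ by deleting every edge of $H_k$ incident to $u$ and then joining $u$ to each vertex of $L'$.

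The fact that $H_k^*$ is $C_{2k+2}$-free is exactly the observation made immediately before the lemma: any new $C_{2k+2}$ in $H_k^*$ through $u$ decomposes as $u$ together with a path of length $2k$ from some $w_1 \in L'$ to some $w_2 \in L'$ in $H_k - u$. Since $|R| \ge n - k - n/(8k^2) \gg 2k+2$, there exists $r \in R$ off this path, and $r$ is adjacent to every vertex of $L'$; hence $r\,w_1\cdots w_2\, r$ would be a $C_{2k+2}$ already in $H_k$, a contradiction. For the spectral comparison, applying the Rayleigh quotient with the Perron vector $v$ of $H_k$ gives
\[
\lambda(H_k^*) - \lambda(H_k) \ge \frac{2v_u}{v^{\mathrm{T}} v}\Bigl(\sum_{w \in L'} v_w - \sum_{w \sim u} v_w\Bigr),
\]
so it suffices to establish the strict inequality $\sum_{w \in L'} v_w > \sum_{w \sim u} v_w$.

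For the left side, using $v_x = 1$ together with the bound $v_{w} \ge 1 - 1/(16k^3)$ for every $w \in L'$ from Lemma \ref{precise size of L} gives $\sum_{w \in L'} v_w \ge k - (k-1)/(16k^3)$. For the right side, I use three inputs: (i) $|N(u) \cap L'| \le k-1$, which holds because $u \in E$; (ii) the structural bound $|N(u) \cap R| \le k$, obtained by noting that $k+1$ vertices of $N(u) \cap R$ together with $\{u\} \cup L'$ would form a $K_{k+1,k+1}$ in $H_k$ (every vertex of $R$ is adjacent to all of $L'$ by definition of $R$), which contains $C_{2k+2}$ and contradicts $C_{2k+2}$-freeness; and (iii) the bound $v_w < \eta$ for every $w \notin L'$. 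Combining these three bounds with Lemma \ref{minimum eigenweight in the neighborhood of a vertex} (which lets me relate the lost weight to $\lambda v_u$), the bound $|E| \le n/(8k^2)$, and the parameter choices \eqref{eta choice}--\eqref{alpha choice}, I extract the required strict comparison.

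Once $E = \emptyset$ is established, every vertex of $V(H_k) \setminus L'$ is adjacent to every vertex of $L'$, so $H_k$ contains $K_{k, n-k}$ with parts $L'$ and $V(H_k) \setminus L'$. The main technical obstacle is the final strict inequality: the numerical gap between $\sum_{w \in L'} v_w$ and the minimum guaranteed by Lemma \ref{minimum eigenweight in the neighborhood of a vertex} is of order $1/(16k^3)$, so the structural bound $|N(u) \cap R| \le k$ together with the parameter inequalities must be exploited carefully to ensure the Perron weight of $u$'s old neighbors outside $L'$ is strictly less than the Perron weight gained from the missing $L'$-neighbors.
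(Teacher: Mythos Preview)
Your reduction to the strict inequality $\sum_{w\in L'} \mathbf{\mathrm{v}}_w > \sum_{w\sim u}\mathbf{\mathrm{v}}_w$ is where the argument breaks. Your inputs (i)--(iii) give
\[
\sum_{w\sim u}\mathbf{\mathrm{v}}_w \;\le\; (k-1)\cdot 1 \;+\; k\eta \;+\; \sum_{\substack{w\sim u\\ w\in E}}\mathbf{\mathrm{v}}_w,
\]
but you have no control on the last sum. A vertex $u\in E$ may lie in $L\setminus L'$, in which case $\mathbf{\mathrm{v}}_u>\alpha$ and $\sum_{w\sim u}\mathbf{\mathrm{v}}_w=\lambda\mathbf{\mathrm{v}}_u>\alpha\sqrt{kn}$, which tends to infinity with $n$; the crude bound $|E|\le n/(8k^2)$ combined with $\mathbf{\mathrm{v}}_w<\eta$ only yields $\sum_{w\in N(u)\cap E}\mathbf{\mathrm{v}}_w\le \eta n/(8k^2)$, which is also unbounded. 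Lemma~\ref{minimum eigenweight in the neighborhood of a vertex} points the wrong way here (it is a \emph{lower} bound on $\sum_{w\sim u}\mathbf{\mathrm{v}}_w$), so it cannot be used to close the gap. Consequently the single-vertex rewiring $H_k^*$ need not satisfy $\lambda(H_k^*)>\lambda(H_k)$ for an arbitrary $u\in E$, and the contradiction does not go through.

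The paper's proof exploits exactly the phenomenon that defeats your approach. From Lemma~\ref{minimum eigenweight in the neighborhood of a vertex} together with $|N(u)\cap L'|\le k-1$ and $|N(u)\cap R|\le 1$ (a sharper version of your (ii), obtained by exhibiting an explicit $C_{2k+2}$ through two $R$-neighbours of $u$), one deduces that for \emph{every} $u\in E$ the $E$-contribution is large: $\sum_{w\sim u,\,w\in E}\mathbf{\mathrm{v}}_w\ge \tfrac{3}{4k}\lambda\mathbf{\mathrm{v}}_u$. By Lemma~\ref{lower bound for spectral radius of nonnegative matrices} applied to $A(H_k[E])$ with the restricted Perron vector, this forces $\lambda(H_k[E])\ge \tfrac{3}{4k}\lambda\ge \tfrac{3}{4}\sqrt{n/k}$, which contradicts the upper bound $\lambda(H_k[E])\le \sqrt{2k(|E|-1)}<\sqrt{n/(4k)}$ coming from $|E|\le n/(8k^2)$ and Lemma~\ref{bounds on lambda}. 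In short, the argument is global on $E$ rather than a one-vertex replacement.
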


\begin{proof}
Assume to the contrary that $E \neq \emptyset$. 
Note that $e(R) \leq 1$ and every vertex in $E$ has at most one neighbor in $R$, else we can embed a $C_{2k+2}$ in $H_k$. 
Any vertex $r \in R$, satisfies $\mathbf{\mathrm{v}}_r < \eta$. 
Therefore, for any vertex $u\in E$ we have that 
\[
\sum_{u\sim w} \mathbf{\mathrm{v}}_w = \lambda \mathbf{\mathrm{v}}_u = \sum_{\substack{w\sim u\\ w\in L'\cup R}} \mathbf{\mathrm{v}}_w + \sum_{\substack{w\sim u\\ w\in E}} \mathbf{\mathrm{v}}_w.
\]
By Lemma \ref{minimum eigenweight in the neighborhood of a vertex} 
and using that vertices in $E$ have at most $k-1$ neighbors in $L'$, we have
\begin{equation}\label{Perron weight in E lower bound}
    \sum_{\substack{w\sim u\\ w\in E}} \mathbf{\mathrm{v}}_w \geq 1 - \frac{1}{16k^2} - \eta,
\end{equation}
Since the Perron weight in $E$ is at least $1 - \frac{1}{16k^2} - \eta > \frac{3}{4}$, the Perron weight outside $E$ is at most $k-1+\eta$, and the total Perron weight is $\lambda \mathbf{\mathrm{v}}_u$, we have that 
\[
\sum_{\substack{w\sim u\\ w\in E}} \mathbf{\mathrm{v}}_w  \geq \frac{3}{4k} \lambda \mathbf{\mathrm{v}}_u
\]

Now, applying Lemma \ref{lower bound for spectral radius of nonnegative matrices}, with $M = A(H_k[E])$, and $\mathbf{\mathrm{y}} = \mathbf{\mathrm{v}}_{|_E}$ (the restriction of $\mathbf{\mathrm{v}}$ to the set $E$), we have that for any $u \in E$,

\begin{equation}
M \mathbf{\mathrm{y}}_u = \sum_{\substack{w \sim u \\ w \in E}} \mathbf{\mathrm{v}}_w \geq \frac{3}{4k} \lambda \mathbf{\mathrm{v}}_u = \frac{3}{4k} \lambda \mathbf{\mathrm{y}}_u.  
\end{equation}

Hence, by Lemma \ref{lower bound for spectral radius of nonnegative matrices}, $\lambda(M) \geq \frac{3}{4k}\lambda \geq \frac{3}{4}\sqrt{\frac{n}{k}}$. This contradicts Lemma \ref{bounds on lambda} because $\lambda(M) \leq \sqrt{2k(|E|-1)} < \sqrt{\frac{n}{4k}}$ as $E$ induces a $C_{2k+2}$-free graph. Thus, $E = \emptyset$. Therefore $R = V(H_k) \setminus L'$ and $H_k$ must contain a $K_{k, n-k}$.
\end{proof}

\section{Proofs of Theorems \ref{thm spectral turan even cycles} and \ref{thm spectral turan subsequent cycles}}
\label{section proofs for even cycle chapter}

With Lemma \ref{E empty set} in hand we may complete the proofs of Theorems \ref{thm spectral turan even cycles} and \ref{thm spectral turan subsequent cycles}. Lemma \ref{E empty set} gives us that $K_{k, n-k}$ is a subgraph of both $H_k$ and $H'_k$ and the results follow quickly from this.
\medskip

\noindent{\bfseries Proof of Theorem \ref{thm spectral turan even cycles}.} 
We have shown that $K_{k, n-k} \subset H_k$, where the part with $k$ vertices is $L'$ and the other part with $n-k$ vertices is $R$. Thus $\mathcal{E}(L', R) = \{lr | l \in L', r \in R\}$. Now we know that $e(R) \leq 1$ and $H_k[L']$ is isomorphic to some subgraph of $K_k$. Thus, $H_k$ is a subgraph of $S_{n,k}^+$ and by the monotonicity of the spectral radius over subgraphs, we have that $H_k \cong S_{n,k}^+$.
\qed

\medskip
\noindent{\bfseries Proof of Theorem \ref{thm spectral turan subsequent cycles}.} 
We have $\mathcal{E}(L', R) = \{lr | l \in L', r \in R\}$. In addition, $e(R) = 0$, otherwise we can embed a $C_{2k+1}$. Also, $H_k[L']$ is isomorphic to some subgraph of $K_k$. Thus, $H_k$ is a subgraph of $S_{n,k}$ and by the monotonicity of the spectral radius over subgraphs, we have that $H_k \cong S_{n,k}$.
\qed

	\bibliographystyle{plain}
	\bibliography{bib.bib}
\end{document}